\newtheorem{theorem}{Theorem}[section]                    
\newtheorem{proposition}[theorem]{Proposition}            
\newtheorem{corollary}[theorem]{Corollary}                
\newtheorem{lemma}[theorem]{Lemma}
\newtheorem{remark}[theorem]{Remark} 
\newtheorem{question}[theorem]{Question}
\newtheorem{claim}[theorem]{Claim}
\newtheorem{definition}{Definition}[section]
\newcommand{\act}{\curvearrowright}
\newcommand{\co}{\overline{\rm co}^{\rm w}}
\newcommand{\eci}{{\rm Esscom}_\infty}
\newcommand{\ecp}{{\rm Esscom}_p}
\newcommand{\efp}{{\rm Essfix}_p}
\newcommand{\efi}{{\rm Essfix}_\infty}
\newcommand{\Z}{\mathbb{Z}}
\newcommand{\Zp}{\mathbb{Z}_{+}}
\newcommand{\crpr}[3]{#1 \bar{\times}_{#2}#3}
\newcommand{\lcrpr}[3]{#1 \bar{\ltimes}_{#2}#3}
\newcommand{\rcrpr}[3]{#1 \bar{\rtimes}_{#2}#3}
\begin{document}
\title[Essential commutants of semicrossed products]{Essential commutants of semicrossed products}
\author[K.~Hasegawa]{Kei Hasegawa}
\address{Graduate~School~of~Mathematics, Kyushu~University, Fukuoka~819-0395, Japan}
\email{ma213034@math.kyushu-u.ac.jp}

\begin{abstract}
Let $\alpha:G\act M$ be a spatial action of countable abelian group on a ``spatial'' von Neumann algebra $M$
and $S$ be its unital subsemigroup with $G=S^{-1}S$.
We explicitly compute the essential commutant and the essential fixed-points, modulo the Schatten $p$-class or the compact operators, of the w$^*$-semicrossed product of $M$ by $S$ when $M'$ contains no non-zero compact operators.
We also prove a weaker result when $M$ is a von Neumann algebra on a finite dimensional Hilbert space and $(G,S)=(\Z,\Zp)$,
which extends a famous result due to Davidson (1977) for the classical analytic Toeplitz operators.
\end{abstract}

\maketitle
\section{Introduction}
Let $A$ be a (not necessarily self-adjoint) operator algebra on a Hilbert space $\mathcal{H}$.
We denote by $\mathfrak{S}_p=\mathfrak{S}_p(\mathcal{H})$ the Schatten $p$-class operators on $\mathcal{H}$ with $1\leq p < \infty$ and by $\mathfrak{S}_\infty=\mathfrak{S}_\infty(\mathcal{H})$ the compact operators $\mathcal{K}=\mathcal{K(H)}$ on $\mathcal{H}$.
We also denote by $\mathcal{I}(A)$ the set of all isometries in $A$.
In this paper we investigate the following two sets:
\begin{align*}
&\ecp (A)=\{ X\in\mathcal{B(H)} \,|\, aX - Xa \in \mathfrak{S}_p(\mathcal{H}) \ {\rm for}\  a\in A\},\\
&\efp (A)=\{ X\in\mathcal{B(H)} \,|\, v^*Xv-X \in \mathfrak{S}_p(\mathcal{H}) \ {\rm for}\  v\in \mathcal{I}(A) \},
\end{align*}
called the {\it essential commutant} and the {\it essential fixed-points} of $A$ modulo
the $*$-ideal $\mathfrak{S}_p$, respectively.
Clearly, $\ecp(A)$ is contained in $\efp(A)$,
and these two sets coincide when $A$ is a C$^*$-algebra which contains the identity operator,
since any unital C$^*$-algebra is the linear span of its unitary elements.
Johnson and Parrott \cite{Jo-Pa} and Popa \cite{Po} proved that
$\eci(A)=A' + \mathcal{K}$ holds when $A$ is a von Neumann algebra,
and Hoover \cite{Ho} proved that $\ecp(A)=A'+\mathfrak{S}_p$ when $A$ is a C$^*$-algebra but $p\neq \infty$.
On the other hand,
for a non-self-adjoint algebra these two sets do not coincide in general,
and thus the computation of them is non-trivial.
Such non-trivial works, among others, are $\eci(T(H^\infty))=T(H^\infty + C)+\mathcal{K}(H^2)$
due to Davidson \cite{Da} and $\efi(T(H^\infty))=T(L^\infty)+\mathcal{K}(H^2)$ due to Xia \cite{Xi},
where $T(H^\infty)$, $T(H^\infty + C)$, and $T(L^\infty)$ are the sets of all Toeplitz operators
on the Hardy space $H^2=H^2(\mathbb{T})$ whose symbols are in the bounded Hardy space $H^\infty=H^\infty(\mathbb{T})$, the Douglas algebra $H^\infty + C$ (with $C=C(\mathbb{T})$), and $L^\infty=L^\infty(\mathbb{T})$, respectively.
See \cite[Chapter 6,7]{Do} for those terminologies.

In this paper, we study the essential commutant and the essential fixed-points for w$^*$-semicrossed products.
The notation below follows \cite{Ka}.
Let $M$ be a von Neumann algebra on a Hilbert space $\mathcal{H}$,
$G$ be a countable abelian group acting on $M$ by $\alpha$,
and $S$ be a subsemigroup of $G$ which contains the unit of $G$ and generates $G$.
We also assume that the action $\alpha:G\act M$ is {\it spatial}, that is, $\alpha$ is implemented by unitary operators $\{u_g\,|\, g\in G\}$ on $\mathcal{H}$.
We denote by the same symbol $\alpha$ the action of $G$ on $M'$ implemented by $\{u_g \,|\, g\in G \}$.
Then we can construct the left and the right w$^*$-crossed products $\lcrpr{G}{\alpha}{M}$ and $\rcrpr{M'}{\alpha}{G}$
on $\mathbb{L}^2:=\mathcal{H}\otimes \ell^2(G)$ with $(\lcrpr{G}{\alpha}{M})'=\rcrpr{M'}{\alpha}{G}$.
The left and the right reduced w$^*$-semicrossed products $\lcrpr{S}{\alpha}{M}$ and $\rcrpr{M}{\alpha}{S}$ are constructed as $\sigma$-weakly closed subalgebras of $\lcrpr{G}{\alpha}{M}$ and $\rcrpr{M}{\alpha}{G}$, respectively.
Let $P$ denote the orthogonal projection from $\mathbb{L}^2$ onto $\mathbb{H}^2:=\mathcal{H}\otimes \ell^2(S)$
and define the Toeplitz map $T:\mathcal{B}(\mathbb{L}^2)\to \mathcal{B}(\mathbb{H}^2)$ by
$T_X:= PX|_{\mathbb{H}^2}$ for $X\in \mathcal{B}(\mathbb{L}^2)$.
The left and the right w$^*$-semicrossed products $\crpr{S}{\alpha}{M}$ and $\crpr{M}{\alpha}{S}$
are constructed as $\sigma$-weakly closed subalgebras of $\mathcal{B}(\mathbb{H}^2)$,
and in fact, coincide with $T(\lcrpr{S}{\alpha}{M})$ and $T(\rcrpr{M}{\alpha}{S})$, respectively (see Proposition \ref{BH}),
where for a subset $\mathfrak{F}$ of $\mathcal{B}(\mathbb{L}^2)$ the $T(\mathfrak{F})$ stands for $\{T_F\,|\,F\in \mathfrak{F} \}$.
In the typical case of $(M,\mathcal{H},G,S)=(\mathbb{C},\mathbb{C},\Z,\Zp)$,
the ``left'' and the ``right'' algebras $(\lcrpr{G}{\alpha}{M},\lcrpr{S}{\alpha}{M},\crpr{S}{\alpha}{M})$ and $(\rcrpr{M}{\alpha}{G},\rcrpr{M}{\alpha}{S},\crpr{M}{\alpha}{S})$ coincide and become $(L^\infty, H^\infty,T(H^\infty))$.
The concept of w$^*$-semicrossed products and reduced ones were introduced by Kakariadis \cite{Ka} for $\sigma$-weakly closed operator algebras and their normal endomorphisms
and for von Neumann algebras and their automorphisms, i.e., the case of $(G,S)=(\Z,\Zp)$, respectively.
The latter coincides with the adjoint of  analytic crossed products studied by McAsey, Muhly, and Saito \cite{Mc-Mu-Sa}.
Toeplitz operators associated with analytic crossed products were studied by Saito \cite{Sa} and
the algebras of analytic Toeplitz operators in this sense essentially coincide with w$^*$-semicrossed products as above.

In Section 2 we define these objects and prove some elementary properties.
When $M'$ contains no non-zero compact operators,
we can explicitly compute $\ecp( \crpr{S}{\alpha}{M})$ and $\efp(\crpr{S}{\alpha}{M})$ as follows. 

\begin{theorem}\label{thmd}
Let $M\subset \mathcal{B(H)}$ be a von Neumann algebra, $\alpha :G \act M$ a spatial action of a discrete countable abelian group, and $S$ be a subsemigroup of $G$ which contains the unit of $G$ and generates $G$.
If $M'$ contains no non-zero compact operators, then
\begin{align*}
\efp( \crpr{S}{\alpha}{M})=T(\rcrpr{M'}{\alpha}{G}) + \mathfrak{S}_p(\mathbb{H}^2),\;\;
\ecp ( \crpr{S}{\alpha}{M})=\crpr{M'}{\alpha}{S} + \mathfrak{S}_p(\mathbb{H}^2)
\end{align*}
hold for every $1 \leq p \leq \infty$.
\end{theorem}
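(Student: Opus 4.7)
The $\supset$ inclusions come from exact algebraic identities on $\mathbb{H}^2$. Any isometry $v \in \crpr{S}{\alpha}{M}$ lifts through $T$ to some $w \in \lcrpr{S}{\alpha}{M}$ with $T_w = v$, and the analyticity $wP = PwP$ together with the isometry condition $Pw^*wP = P$ forces $w^*w = 1$ throughout $\lcrpr{G}{\alpha}{M}$: the element $w^*w - 1$ of that von Neumann algebra has vanishing compression to $\mathbb{H}^2$, and its Fourier coefficients against $S - S = G$ are therefore all zero. Combined with the mutual-commutant relation $wY = Yw$ for $Y \in \rcrpr{M'}{\alpha}{G}$, this delivers $v^* T_Y v = T_Y$ exactly, hence $\efp \supset T(\rcrpr{M'}{\alpha}{G}) + \mathfrak{S}_p$. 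Restricting $Y$ further to $\rcrpr{M'}{\alpha}{S}$ makes $Y$ preserve $\mathbb{H}^2$ as well, and the same commutation upgrades to $aT_Y = T_Ya$ for every $a \in \crpr{S}{\alpha}{M}$, giving $\ecp \supset \crpr{M'}{\alpha}{S} + \mathfrak{S}_p$.

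For the hard direction on $\efp$, note that every unitary $u \in M$ embeds as an isometry $u \otimes 1 \in \crpr{S}{\alpha}{M}$, so any $X \in \efp$ essentially commutes with the whole von Neumann algebra $M \otimes 1 \subset \mathcal{B}(\mathbb{H}^2)$. Johnson--Parrott--Popa (for $p = \infty$) or Hoover (for $p < \infty$) then writes $X = Y_0 + K$ with $Y_0 \in (M \otimes 1)' = M' \otimes \mathcal{B}(\ell^2(S))$ and $K \in \mathfrak{S}_p$. The hypothesis $M' \cap \mathcal{K}(\mathcal{H}) = 0$ propagates via matrix coefficients to $(M' \otimes \mathcal{B}(\ell^2(S))) \cap \mathcal{K}(\mathbb{H}^2) = 0$, so this decomposition is unique. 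Since $V_s = u_s \otimes \lambda_s$ normalizes $M' \otimes \mathcal{B}(\ell^2(S))$, uniqueness promotes the approximate $V_s^*XV_s - X \in \mathfrak{S}_p$ to the exact identity $V_s^* Y_0 V_s = Y_0$ for every $s \in S$. A Brown--Halmos-type extension---setting $\langle Y\xi, \eta\rangle := \langle Y_0 U_s\xi, U_s\eta\rangle$ on the dense subspace $\bigcup_{s \in S} U_{-s}\mathbb{H}^2 \subset \mathbb{L}^2$ with consistency supplied by the exact $V_s$-invariance---produces a bounded operator $Y$ on $\mathbb{L}^2$ with $T_Y = Y_0$, $U_g Y U_g^{-1} = Y$ for every $g \in G$, and matrix entries in $M'$; hence $Y \in (\lcrpr{G}{\alpha}{M})' = \rcrpr{M'}{\alpha}{G}$ and $X - T_Y = K \in \mathfrak{S}_p$.

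For the hard direction on $\ecp$, bootstrap: since $\ecp \subset \efp$, the previous step gives $X = T_Y + K$ with $Y \in \rcrpr{M'}{\alpha}{G}$. The stronger commutation $V_s X - X V_s \in \mathfrak{S}_p$ reduces to $PU_s(1-P)YP \in \mathfrak{S}_p(\mathbb{H}^2)$, a Hankel-type operator whose matrix coefficient at $(s', t') \in S \times S$ equals $u_s\,\alpha_{t'}(Y_{s'-s-t'})$ when $s' - s \notin S$ and vanishes otherwise. These coefficients all lie in the coset $u_s M'$, and since matrix coefficients of Schatten-class operators are compact while $u_s M' \cap \mathcal{K}(\mathcal{H}) = 0$ (the unitary $u_s$ normalizes $M'$ and $M' \cap \mathcal{K}(\mathcal{H}) = 0$), every such coefficient vanishes. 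Taking $t' = 0$ and using $G = S - S$, this forces $Y_g = 0$ for every $g \in G \setminus S$, whence $Y \in \rcrpr{M'}{\alpha}{S}$ and $X \in \crpr{M'}{\alpha}{S} + \mathfrak{S}_p$. The delicate step I anticipate is the promotion of approximate to exact invariance, since both the Brown--Halmos extension in the $\efp$ step and the matrix-coefficient vanishing in the $\ecp$ step rest on the clean uniqueness afforded by the hypothesis $M' \cap \mathcal{K}(\mathcal{H}) = 0$.
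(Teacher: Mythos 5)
Your proposal is correct and follows essentially the same strategy as the paper: the easy inclusions by lifting isometries through the Toeplitz map and observing $T_{v^*}T_yT_v = T_{v^*yv} = T_y$; the hard $\efp$ inclusion by applying Johnson--Parrott/Popa/Hoover to the von Neumann algebra $T(\pi(M))$, then using $\pi(M)'\cap\mathcal{K}=\{0\}$ (the paper's Lemma~\ref{lemnocpt}) to upgrade the approximate shift-invariance of the commutant component to exact invariance, and finally a Brown--Halmos criterion to recognize it as $T_y$ for some $y\in\rcrpr{M'}{\alpha}{G}$; the hard $\ecp$ inclusion by a bootstrap showing the Hankel compression $P\lambda(s)(1-P)yP$ is compact with matrix coefficients in $\pi(M)'$, hence vanishing, so $y\in\rcrpr{M'}{\alpha}{S}$.

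One genuine (but equivalent) variation: for the Brown--Halmos step the paper invokes amenability of $S$ (Proposition~\ref{lemce}, an invariant mean producing a fixed point in the weak$^*$-closed convex hull) to manufacture the extension $y$, whereas you propose a direct construction of $y$ as a sesquilinear form on the dense subspace $\bigcup_{s\in S}U_{-s}\mathbb{H}^2$, with consistency coming from the exact $V_s$-invariance and directedness of $S$ (for $s,t\in S$ one has $s+t\in S$ with $(s+t)-s,(s+t)-t\in S$). Both work; yours is perhaps more elementary, while the paper's invariant-mean device is reused earlier to prove the exact Brown--Halmos characterization (Proposition~\ref{BH}).

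Two small notational slips worth fixing. First, the theorem is about the \emph{left} semicrossed product $\crpr{S}{\alpha}{M}$, generated by $T(\pi(M))$ and $\{T_{\lambda(s)}\}$, not by $M\otimes 1$; so the von Neumann algebra you should feed to Johnson--Parrott is $T(\pi(M))$, whose commutant is $T(\pi(M)')$, and only after conjugating by $\hat{W}$ (Remark~\ref{remuni}) does this become $M'\bar\otimes\mathcal{B}(\ell^2(S))$. Second, in the conjugated (right) picture the isometry is $T_{\rho(s)}$ with $\rho(s)=u_s^*\otimes\lambda_s$, not $u_s\otimes\lambda_s$; this sign does not affect the argument but should be stated consistently. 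Neither issue affects the substance of the proof.
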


\begin{corollary}\label{cor}
The same assertion of Theorem \ref{thmd} holds when $\mathcal{H}$ is the standard Hilbert space $L^2(M)$
and $M$ is either diffuse, $\mathcal{B}(\ell^2)$ of infinite dimension, or a {\rm (}possibly infinite{\rm )} direct sum of
them.
\end{corollary}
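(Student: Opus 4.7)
The plan is to reduce the statement directly to Theorem \ref{thmd} by verifying, in each listed case, that $M'$ on the standard Hilbert space $\mathcal{H} = L^2(M)$ contains no non-zero compact operator. The spatiality of any action of $G$ on $M$ is automatic on the standard form, since each automorphism of $M$ is canonically implemented by a unitary on $L^2(M)$, so only the compactness condition on $M'$ is at issue.

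For the diffuse case, the modular conjugation gives $M' = JMJ$ and identifies the projection lattices of $M$ and $M'$; in particular $M'$ is again diffuse. The key subclaim to record is that a diffuse von Neumann algebra $N$ contains no non-zero compact operator: if $0 \neq K \in N$ were compact, then for sufficiently small $\lambda > 0$ the spectral projection $E := \chi_{[\lambda,\infty)}(K^*K)$ is a non-zero finite-rank projection in $N$, and the finite-dimensional corner $ENE$ then produces a minimal projection of $N$ sitting under $E$, contradicting diffuseness.

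For $M = \mathcal{B}(\ell^2)$ of infinite dimension, the standard space is the Hilbert--Schmidt space and $M' = JMJ$ acts by right multiplication $R_a : x \mapsto xa$; to rule out compactness of $R_a$ for $a \neq 0$, I would fix $\xi$ with $a^*\xi \neq 0$ and evaluate $R_a$ on the bounded orthogonal sequence $x_n = e_n \otimes \xi^*$, obtaining orthogonal images of fixed positive Hilbert--Schmidt norm, which forbids any convergent subsequence. Finally, for a direct sum $M = \bigoplus_i M_i$, the central projections $P_i$ lie in $M'$, so every $X \in M'$ splits as $X = \bigoplus_i X_i$ with $X_i \in M_i'$; compactness of $X$ forces each $X_i = P_i X P_i$ to be compact and hence zero by the previous two cases. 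The main obstacle, such as it is, is purely organizational: one must consistently transfer the relevant structural properties (diffuseness, absence of minimal projections) from $M$ to $M'$ via the standard form, after which Theorem \ref{thmd} applies verbatim and yields the corollary.
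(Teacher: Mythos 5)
Your overall strategy matches the paper's: confirm spatiality from the standard form and then check $M' \cap \mathcal{K} = \{0\}$ case by case; your arguments for the diffuse and $\mathcal{B}(\ell^2)$ summands are correct (and somewhat more explicit than the paper, which instead uses the modular anti-isomorphism to reduce the task to $\mathcal{K}\cap M = \{0\}$ and then simply asserts the diffuse case is clear). The one real gap is in the direct-sum step. To apply ``the previous two cases'' to $X_i = P_i X P_i$, you need to know that $M_i$, acting on $P_i L^2(M)$, is in its own standard form, i.e.\ that $(M_i, P_i L^2(M)) \cong (M_i, L^2(M_i))$. Both of your earlier cases were established specifically on the standard space of the summand: the diffuse argument passes through the modular conjugation $J$ to transfer diffuseness to the commutant, and the $\mathcal{B}(\ell^2)$ argument is tied to the Hilbert--Schmidt realization of $L^2(\mathcal{B}(\ell^2))$. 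Without this identification the conclusion $X_i = 0$ does not follow. The paper handles exactly this point by citing Haagerup's Lemma 2.6, which gives $(Mq, qL^2(M)) \cong (Mq, L^2(Mq))$ for every central projection $q$; once you insert that reference (or a short argument that the standard form respects direct sums), your proof closes and essentially coincides with the paper's.
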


An another immediate corollary of Theorem \ref{thmd} is the double commutant theorem for $\crpr{S}{\alpha}{M}$ and $\crpr{S}{\alpha}{M'}$ in the Calkin algebra under the assumption that $M\cap \mathcal{K}=M'\cap\mathcal{K}=\{0\}$.
The proofs of Theorem \ref{thmd} and Corollary \ref{cor} will be given in Section 3.
In Section 4, we consider the case that $\mathcal{H}$ is finite dimensional (and hence so is $M$) and $(G,S)=(\Z, \Zp)$,
and can prove the following theorem:

\begin{theorem}\label{thmf}
Let $M$ be a von Neumann algebra on a finite dimensional Hilbert space $\mathcal{H}$
and $\alpha$ be a $*$-automorphism on $M$ implemented by a unitary operator on $\mathcal{H}$.
Then we have
\begin{align*}
\ecp(\crpr{\Zp}{\alpha}{M}) \subset T(\rcrpr{M'}{\alpha}{\Z}) + \mathfrak{S}_p(\mathbb{H}^2) \quad {\rm for}\;\; 1\leq p \leq \infty.
\end{align*}
\end{theorem}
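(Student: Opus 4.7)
The plan is to imitate the classical asymptotic Brown--Halmos / Davidson argument, adapted to this semicrossed-product setting: construct the putative symbol on $\mathbb{L}^2$ as a $\sigma$-weak cluster point of translates of $X$, verify it lies in $\rcrpr{M'}{\alpha}{\Z}$ using the essential commutation relations, and then show the induced Toeplitz operator equals $X$ modulo $\mathfrak{S}_p$.

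Let $V := T_{u_1} \in \crpr{\Zp}{\alpha}{M}$ be the Toeplitz compression of the bilateral unitary $U := u_1 \in \mathcal{B}(\mathbb{L}^2)$; then $V^*V = I$, and because $\mathcal{H}$ is finite dimensional the defect $E_0 := I - VV^*$ (the projection onto $\mathcal{H}\otimes\delta_0$) is finite rank. From $VX - XV \in\mathfrak{S}_p$ one deduces both $V^*XV - X\in\mathfrak{S}_p$ (via $V^*V = I$) and $VXV^* - X\in\mathfrak{S}_p$ (the correction $XE_0$ being finite rank), so $X$ is essentially fixed by $\mathrm{Ad}\,V$ modulo $\mathfrak{S}_p$, setting the stage for an asymptotic Toeplitz-ification.

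Extend $X$ by zero to $\widetilde X \in \mathcal{B}(\mathbb{L}^2)$ and form the $\|X\|$-bounded sequence $Y_n := U^{*n}\widetilde X U^n$; by Banach--Alaoglu, extract a $\sigma$-weak cluster point $Y \in \mathcal{B}(\mathbb{L}^2)$. A direct calculation gives $P Y_n P|_{\mathbb{H}^2} = V^{*n}XV^n$, so $T_Y$ is the corresponding cluster point of $\{V^{*n}XV^n\}$. The relation $UY_nU^* = Y_{n-1}$ passes to the limit as $UYU^* = Y$, giving commutation with $U$. For $a\in M$ acting naturally on $\mathbb{H}^2$ and extended to $\hat a\in\mathcal{B}(\mathbb{L}^2)$, a covariance calculation gives $[\hat a, Y_n] = U^{*n}\,\widetilde{[\alpha^n(a), X]}\, U^n$, where $[\alpha^n(a), X]\in\mathfrak{S}_p$. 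Now the finite-dimensionality of $\mathcal{H}$ makes the orbit $\{\alpha^n(a)\}_n$ relatively compact in $M$, so along a convergent subnet $\alpha^{n_j}(a)\to b$ the commutators $[\alpha^{n_j}(a),X]$ converge in operator norm to the compact operator $[b, X]$. Combining this with the weak convergence $U^{n_j}\xi\to 0$ (supports being shifted to $+\infty$) and compactness of $[b,X]$ forces $[\hat a, Y_{n_j}]\to 0$ $\sigma$-weakly, hence $[\hat a, Y] = 0$. Therefore $Y\in\rcrpr{M'}{\alpha}{\Z}=(\lcrpr{\Z}{\alpha}{M})'$.

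The remaining step, which I expect to be the main obstacle, is to show $T_Y - X \in \mathfrak{S}_p$. The telescoping identity $V^{*n}XV^n - X = \sum_{k=0}^{n-1} V^{*k}(V^*XV-X)V^k$ together with the strong convergence $V^{*k}\to 0$ and the compactness of $V^*XV - X$ yields $\|V^{*k}(V^*XV-X)V^k\|_{\mathrm{op}}\to 0$, which by a dominated-style argument places $T_Y - X$ in $\mathcal{K}$ and thus settles $p=\infty$. For $1\leq p<\infty$ the naive bound $\|V^{*n}XV^n - X\|_p \leq n\|V^*XV-X\|_p$ grows linearly and is useless, so one must extract a uniform $\mathfrak{S}_p$-bound from cancellation in the telescoping --- presumably by decomposing $X$ along the finite spectral decomposition of $u$ on $\mathcal{H}$ and using the remaining essential commutators $[a,X]$ ($a\in M$) to control individual Fourier components --- and then invoke the lower semicontinuity of $\|\cdot\|_p$ under the $\sigma$-weak topology. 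It is this finite-dimensional $\mathfrak{S}_p$-bookkeeping that replaces the hypothesis $M'\cap\mathcal{K}=\{0\}$ of Theorem~\ref{thmd}.
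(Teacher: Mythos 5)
Your setup---forming $Y$ as a $\sigma$-weak cluster point of $U^{*n}\widetilde X U^n$ and checking $[\hat a,Y]=[U,Y]=0$ so that $Y\in\rcrpr{M'}{\alpha}{\Z}$---is essentially Lemma~\ref{lemdi} of the paper, and that part is sound. The genuine gap is exactly where you flag ``the main obstacle,'' and it already breaks at $p=\infty$. The telescoping identity shows that each partial sum $V^{*n}XV^n-X=\sum_{k<n}V^{*k}(V^*XV-X)V^k$ is compact and that the individual summands tend to $0$ in norm, but that is far from showing the partial sums form a norm-Cauchy (or even norm-bounded-in-$\mathcal{K}$-sum) sequence, and a $\sigma$-weak limit of compact operators need not be compact. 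Indeed the hypothesis you have used so far --- $V^*XV-X\in\mathcal{K}$ together with essential commutation with $\pi(M)$ --- only makes $X$ an ``asymptotically Toeplitz'' operator in the Barr\'{\i}a--Halmos sense, and already in the scalar case $M=\mathbb{C}$ such operators form a strictly larger class than $T(L^\infty)+\mathcal{K}$. So the conclusion cannot follow from the telescoping alone; you must bring in the full strength of $X\in\eci(\crpr{\Zp}{\alpha}{M})$, i.e.\ essential commutation with elements of the semicrossed product much more complicated than $T_{\lambda(1)}$ and $T_{\pi(a)}$.

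This is precisely what the paper's Claim~\ref{claim} supplies, by a contrapositive rather than a direct estimate: if $T_a-T_b$ is not compact, one manufactures $Z\in\lcrpr{\Zp}{\alpha}{M}$ with $[T_Z,T_a]\notin\mathcal{K}$. The construction uses Lemma~\ref{lemca} (that $C^*(T_{\lambda(1)})$ maps onto $C(\mathbb{T})$ in the Calkin algebra) to localize at a point $\gamma\in\mathbb{T}$, builds a sequence of trigonometric polynomials $h_k$ with disjointly supported bump cutoffs, uses Lemma~\ref{lemlim} to push each $\hat h_k\lambda(m_k)$ into the semicrossed product while keeping $\|[T_{p_k},T_a]\|$ bounded below, and then sums a subsequence via Muhly--Xia's \cite[Lemma 2.1]{Mu-Xi}. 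For $1\le p<\infty$ the same scheme runs with the paper's Lemma~\ref{lemcpt} in place of Muhly--Xia. Your speculative paragraph about a ``uniform $\mathfrak{S}_p$-bound from cancellation in the telescoping'' by decomposing along the spectral decomposition of $u$ is not worked out and does not obviously lead anywhere; in the paper the finite-dimensionality of $\mathcal{H}$ is used not for spectral bookkeeping on $u$ but to guarantee that $T_{\lambda(1)}$ is essentially normal (Lemma~\ref{lemca}) and that $p_k,\,p_k^*\to 0$ strongly, both of which feed the contradiction argument.
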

This theorem is nothing but \cite[Theorem 2]{Da} for the classical analytic Toeplitz operators
by specializing $M=\mathbb{C}$ and $p=\infty$,
though our proof is a bit improved with the help of several ideas in \cite{Mu-Xi},\cite{Xi}. 
The reason why we do not consider the essential fixed-points is that there is a technical obstruction to translating the argument in \cite{Xi} into our setting (see Remark \ref{remxia}).
In the final section, we examine whether or not the assertion of Theorem \ref{thmf} still holds when $M$ is an arbitrary von Neumann algebra of standard form.

%
%

\section{Preliminaries}
We begin with a small remark.
\begin{remark}\normalfont\label{rem1}
For a given $v \in \mathcal{I}(A)$ we consider the unital completely positive map $\Psi_v :X \mapsto v^*Xv$ on $\mathcal{B(H)}$. 
Then $\efp (A)$ is nothing but the set of operators ``fixed  modulo $\mathfrak{S}_p$'' by $\{ \Psi_v \,|\, v \in \mathcal{I}(A) \}$.
It is immediate to see that $\ecp (A) \subset \efp (A)$.
Moreover, these two sets coincide if $A$ is a C$^*$-algebra, since any C$^*$-algebra is generated by its unitary elements.
By \cite{Jo-Pa}, \cite{Po} and \cite{Ho} any von Neumann algebra $M\subset \mathcal{B(H)}$ enjoys $\ecp(M)=\efp(M)=M'+\mathfrak{S}_p$ for every $1\leq p \leq \infty$.
\end{remark}

Next, let us recall w$^*$-semicrossed products and Toeplitz maps associated with them.
Our notation and formulation follow \cite{Ka}.
Remark that those do not completely agree with the usual ones for crossed products.
Let $M\subset \mathcal{B(H)}$ be a von Neumann algebra and $G$ be a countable discrete abelian group.
Assume that we have a spatial action $\alpha : G \act M$ implemented by 
a unitary representation $u:G \ni g \mapsto u_g \in \mathcal{B(H)}$,
i.e., we have $\alpha_g(x)=u_gxu^*_g$ for every $x\in M$ and $g\in G$.
Since $G$ acts on $M'$ too by ${\rm Ad}\,u_g$,
we also denote by the same symbol $\alpha_g$ the automorphism ${\rm Ad}\,u_g$ on $M'$.
Let $G\ni g \mapsto \lambda_g \in \mathcal{B}(\ell^2(G))$ be the left regular representation of $G$
and $e_g \in \mathcal{B}(\ell^2(G))$ be the orthogonal projection onto $\mathbb{C}\delta_g$.
Set $\mathbb{L}^2:=\mathcal{H}\otimes \ell^2(G)$
and define representations $\pi : M \to \mathcal{B}(\mathbb{L}^2)$ and $\lambda(\cdot), \rho(\cdot) :G\to \mathcal{B}(\mathbb{L}^2)$ by
$$
\pi(x)=\sum_{g\in G}\alpha_g(x)\otimes e_g,\quad
\lambda(g)=1\otimes \lambda_g,\quad
\rho(g)=u^*_g\otimes \lambda_g
$$
for $x\in M$ and $g\in G$.
The {\it left} and the {\it right w$^*$-crossed products} of $M$ by $G$ are defined to be
the von Neumann algebras $\lcrpr{G}{\alpha}{M}=\pi(M)\vee \{ \lambda(g) \,|\, g\in G \}''$ and $\rcrpr{M}{\alpha}{G}:=M\otimes \mathbb{C}1\vee \{ \rho(g) \,|\, g\in G \}''$, respectively.
It is well-known that $(\lcrpr{G}{\alpha}{M})'=\rcrpr{M'}{\alpha}{G}$ (see e.g. \cite[Theorem 1.21]{Ta}).
We also note that when $M$ is of standard form, each action $\alpha :G\act M$ is spatial thanks to \cite[Theorem 3.2]{Ha}.

\begin{definition}[Kakariadis \cite{Ka}]\normalfont\label{defsc}
Let $M\subset \mathcal{B(H)}$ and $\alpha:G\act M$ be as above.
For a given subsemigroup $S$ of $G$ which contains the unit $\iota$ of $G$ and generates $G$,
the {\it left} and the {\it right reduced w$^*$-semicrossed product} $\lcrpr{S}{\alpha}{M}$ and $\rcrpr{M}{\alpha}{S}$ of $M$ by $S$
are defined to be the $\sigma$-weakly closed subalgebras of $\lcrpr{G}{\alpha}{M}$ and $\rcrpr{M}{\alpha}{G}$
generated by $\pi(M)$ and $\{ \lambda(s) \,|\, s\in S \}$ and by $M\otimes \mathbb{C}1$ and $\{ \rho(s) \,|\, s\in S \}$, respectively.

Let $P$ be the projection onto $\mathbb{H}^2:=\mathcal{H}\otimes \ell^2(S) \subset \mathbb{L}^2$.
The {\it Toeplitz map} $T:\mathcal{B}(\mathbb{L}^2)\to \mathcal{B}(\mathbb{H}^2)$ is defined by $T_X:=PX|_{\mathbb{H}^2}$ for $X \in \mathcal{B}(\mathbb{L}^2)$.
The {\it left} and the {\it right w$^*$-semicrossed product} $\crpr{S}{\alpha}{M}$ and $\crpr{M}{\alpha}{S}$ of $M$
by $S$ are defined to be
$\sigma$-weakly closed subalgebras of $\mathcal{B}(\mathbb{H}^2)$ generated by $T(\pi(M))$ and $\{T_{\lambda(s)} \,|\,s\in S\}$ and by $M\otimes \mathbb{C}1_{\ell^2(S)}$ and $\{T_{\rho(s)} \,|\,s\in S\}$, respectively.
Here for a subset $\mathfrak{F}$ of $\mathcal{B}(\mathbb{L}^2)$ the $T(\mathfrak{F})$ stands for $\{T_F\,|\,F\in \mathfrak{F} \}$.
\end{definition}

\begin{remark}\normalfont\label{reminc}
Since $T$ is normal and multiplicative on $\lcrpr{S}{\alpha}{M}$ and $\rcrpr{M}{\alpha}{S}$,
one has $T(\lcrpr{S}{\alpha}{M}) \subset \crpr{S}{\alpha}{M}\subset \overline{T(\lcrpr{S}{\alpha}{M})}^{\sigma\mathchar`-{\rm w}}$
and $T(\rcrpr{M}{\alpha}{S}) \subset \crpr{M}{\alpha}{S}\subset \overline{T(\rcrpr{M}{\alpha}{S})}^{\sigma\mathchar`-{\rm w}}$.
Moreover, for any $x,z \in (\lcrpr{S}{\alpha}{M}) \cup (\rcrpr{M}{\alpha}{S})$ and $Y\in \mathcal{B}(\mathbb{L}^2)$, we have $T_{x}^*T_Y T_z=T_{x^*Yz}$.
We also note that $P$ is in $\pi(M)'$, and hence one has $$\efp(T(\pi(M)))=\ecp(T(\pi(M)))=T(\pi(M)')+\mathfrak{S}_p.$$
We will use these facts throughout.
\end{remark}

\begin{remark}\normalfont\label{remuni}
Define a unitary operator $W:=\sum_{g\in G} u_g\otimes e_g$ on $\mathbb{L}^2$.
It is easily seen that $W^* \pi (x) W=x\otimes 1$ and $W^*\lambda(g) W=\rho(g)$ for $x\in M$ and $g\in G$.
Moreover, since $W$ commutes with $P$, $\hat{W}:=T_W$ is also unitary on $\mathbb{H}^2$,
and hence one has
$\hat{W}^*T_{\pi(x)}\hat{W}=x\otimes 1_{\ell^2(S)}$ and $\hat{W}^*T_{\lambda(g)} \hat{W}=T_{\rho(g)}$
for $x\in X$ and $g\in G$.
Therefore, the ``left'' and the ``right'' algebras are unitarily equivalent.
\end{remark}

\begin{proposition}\label{prop}
Let $M,G,S,\alpha$ be as in Definition \ref{defsc}.
Then we have $\lcrpr{S}{\alpha}{M}=\{ x\in \lcrpr{G}{\alpha}{M}\,|\, x(1\otimes e_\iota)=Px(1\otimes e_\iota)\}$
and $\rcrpr{M}{\alpha}{S}=\{ x\in \rcrpr{M}{\alpha}{G}\,|\, x(1\otimes e_\iota)=Px(1\otimes e_\iota)\}$.
\end{proposition}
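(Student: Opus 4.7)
My plan is to treat both equalities in parallel; by the unitary equivalence of Remark~\ref{remuni} it suffices to prove the left one, so let $\mathbb{P}$ denote the right-hand set.

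For $\lcrpr{S}{\alpha}{M}\subseteq\mathbb{P}$, I first verify that each generator preserves $\mathbb{H}^2$: $\pi(x)(\xi\otimes\delta_h)=\alpha_h(x)\xi\otimes\delta_h$ stays in the same ``row'', while $\lambda(s)$ sends $\delta_h$ to $\delta_{sh}\in\ell^2(S)$ by the semigroup property $S\cdot S\subseteq S$. Hence every word in $\pi(M)\cup\{\lambda(s):s\in S\}$ lies in $\mathbb{P}$. The condition $(1-P)x(1\otimes e_\iota)=0$ is weak-operator closed (test against vectors $\xi\otimes\delta_\iota$ paired against vectors in $(1-P)\mathbb{L}^2$), hence $\sigma$-weakly closed, so $\mathbb{P}$ contains the $\sigma$-weak closure $\lcrpr{S}{\alpha}{M}$.

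For the reverse inclusion I use Fourier analysis against the dual action. The unitaries $V_\omega(\xi\otimes\delta_g):=\omega(g)\xi\otimes\delta_g$ ($\omega\in\hat G$) form a strongly continuous representation of the compact abelian group $\hat G$ and implement an action $\hat\alpha$ on $\lcrpr{G}{\alpha}{M}$ that fixes $\pi(M)$ and sends $\lambda(g)\mapsto\omega(g)\lambda(g)$. For each $g\in G$ the $\sigma$-weakly continuous spectral projection $E_g(x):=\int_{\hat G}\overline{\omega(g)}\,\hat\alpha_\omega(x)\,d\omega$ takes values in the $g$-spectral subspace, which coincides with $\pi(M)\lambda(g)$; write $E_g(x)=\pi(y_g)\lambda(g)$ for a unique $y_g\in M$. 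A direct computation, using $V_\omega^*(\xi\otimes\delta_\iota)=\xi\otimes\delta_\iota$ and character orthogonality, shows that $E_g(x)(\xi\otimes\delta_\iota)=\alpha_g(y_g)\xi\otimes\delta_g$ equals the $(\mathcal{H}\otimes\mathbb{C}\delta_g)$-component of $x(\xi\otimes\delta_\iota)$. Hence the hypothesis $x(1\otimes e_\iota)\in\mathbb{H}^2$ is equivalent to $y_g=0$ for every $g\notin S$.

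To reassemble $x$, I invoke a Fej\'er-type approximate identity $\{k_i\}\subset L^1(\hat G)$ consisting of trigonometric polynomials whose Fourier transforms $\hat k_i$ are finitely supported, non-negative, normalized so that $\int k_i\,d\omega=1$, and converge pointwise to $1$. The averages $\int k_i(\omega)\,\hat\alpha_\omega(x)\,d\omega$ converge $\sigma$-weakly to $x$ and expand as finite linear combinations $\sum_g c_g^{(i)}E_g(x)$ of the Fourier coefficients; under our hypothesis these are finite sums in $\lcrpr{S}{\alpha}{M}$, so the $\sigma$-weak limit also lies there. The main technical ingredient is the availability of such Fej\'er kernels on $\hat G$ for an arbitrary countable abelian $G$---straightforward for $\Z^n$ via product Fej\'er kernels on tori, and in general available from standard harmonic analysis on the compact metrizable group $\hat G$.
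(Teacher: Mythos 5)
Your proof is correct, and it is structurally the same argument the paper makes but presented in a self-contained form. The paper handles the nontrivial inclusion by citing Loebl--Muhly \cite[Corollary 4.3.2]{Lo-Mu} for the identity $\lcrpr{S}{\alpha}{M}=(\lcrpr{G}{\alpha}{M})^{\hat\alpha}(S)$ and then noting (without proof) that the set on the right-hand side of the Proposition is contained in that spectral subspace; your argument spells out both of those steps. In particular, your verification that $E_g(x)(1\otimes e_\iota)=(1\otimes e_g)x(1\otimes e_\iota)$ (via $V_\omega(1\otimes e_\iota)=1\otimes e_\iota$ and character orthogonality) is precisely what makes the paper's ``it is easily seen'' honest, and your Fej\'er-kernel reassembly of $x$ from its Fourier coefficients $E_g(x)=\pi(y_g)\lambda(g)$ with $g\in S$ is the substance behind the Loebl--Muhly citation. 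The trade-off is the usual one: the paper's proof is a few lines by deferring to the literature, while yours is longer but does not depend on a black-box spectral-subspace theorem and makes visible where the hypotheses (countability of $G$, hence metrizability of $\hat G$; $S$ a subsemigroup containing $\iota$) enter the harmonic-analysis argument. One small point worth flagging if you were to write this up formally: you should either cite a source for the existence of nonnegative trigonometric-polynomial approximate identities on a general compact metrizable abelian group, or note that $|\phi|^2$ for a suitably concentrated trigonometric polynomial $\phi$ does the job; as stated you only sketch the $\Z^n$ case explicitly.
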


\begin{proof}
Clearly, $\lcrpr{S}{\alpha}{M}$ is contained in $\{ x\in \lcrpr{G}{\alpha}{M}\,|\, x(1\otimes e_\iota)=Px(1\otimes e_\iota)\}$.
Let $\hat{G}$ be the dual group of $G$, and $\hat{\alpha}:\hat{G}\to {\rm Aut}(\lcrpr{G}{\alpha}{M})$ the dual action.
By \cite[Corollary 4.3.2]{Lo-Mu},
$\lcrpr{S}{\alpha}{M}$ is the spectral subspace of $\lcrpr{G}{\alpha}{M}$ associated with $S$ by the dual action, 
and we denote it by $(\lcrpr{G}{\alpha}{M})^{\hat{\alpha}}(S)$.
It is easily seen that $(\lcrpr{G}{\alpha}{M})^{\hat{\alpha}}(S)$ contains
$\{ x\in \lcrpr{G}{\alpha}{M}\,|\, x(1\otimes e_\iota)=Px(1\otimes e_\iota)\}$.
By the preceding remark, one has $\rcrpr{M}{\alpha}{S}=\{ x\in \rcrpr{M}{\alpha}{G}\,|\, x(1\otimes e_\iota)=Px(1\otimes e_\iota)\}$.
\end{proof}

Recall that a semigroup $S$ is {\it right amenable} if $S$ has a {\it right invariant mean},
that is, there exists a state $\psi$ on $\ell^\infty(S)$ which satisfies that $\psi(f)=\psi(r_sf)$ for $f\in \ell^\infty(S)$ and $s\in S$,
where $r_sf(t) = f(ts), \, t\in S$.
It is known, see \cite[Theorem 17.5]{He-Ro}, that every abelian semigroup is (right) amenable.

\begin{proposition}\label{lemce}
If $S$ is a right amenable semigroup and $\sigma:S\rightarrow \mathcal{B(H)}$ is a unitary representation,
then $\{\sigma(s) \,|\, s\in S\}' \cap \co \{\sigma(s)^*x\sigma(s) \,|\, s\in S \}\neq \emptyset$ for every $x\in \mathcal{B(H)}$. 
\end{proposition}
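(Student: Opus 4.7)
The plan is to realize $X$ as a weak operator limit of explicit convex averages of the orbit $\{\sigma(s)^* x \sigma(s) : s \in S\}$, which lies in the $\|x\|$-ball of $\mathcal{B}(\mathcal{H})$ because every $\sigma(s)$ is unitary.

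By a version of Day's theorem for right amenable semigroups (a Hahn--Banach/Mazur convex-duality consequence of the existence of a right invariant mean $\psi$), there is a net of finitely supported probability measures $(\mu_\alpha)$ on $S$ such that $\|r_t^* \mu_\alpha - \mu_\alpha\|_{\ell^1(S)} \to 0$ for every $t \in S$, where $r_t^* \mu$ denotes the push-forward of $\mu$ under $s \mapsto st$. Form the averages
$$X_\alpha := \sum_{s \in S} \mu_\alpha(s)\,\sigma(s)^* x \sigma(s) \in \mathrm{co}\{\sigma(s)^* x \sigma(s) : s \in S\}.$$
Since $\|X_\alpha\| \leq \|x\|$, Banach--Alaoglu yields a subnet $(X_{\alpha_\beta})$ converging weakly to some $X$; by construction $X \in \co\{\sigma(s)^* x \sigma(s) : s \in S\}$.

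It remains to verify $X \in \{\sigma(s) : s \in S\}'$. Using $\sigma(st) = \sigma(s)\sigma(t)$ and the unitarity of $\sigma(t)$, one computes
$$\sigma(t)^* X_\alpha \sigma(t) = \sum_{s} \mu_\alpha(s)\,\sigma(st)^* x \sigma(st) = \sum_u (r_t^* \mu_\alpha)(u)\,\sigma(u)^* x \sigma(u),$$
so $\|\sigma(t)^* X_\alpha \sigma(t) - X_\alpha\| \leq \|r_t^* \mu_\alpha - \mu_\alpha\|_1 \cdot \|x\| \to 0$. Passing to the weak limit along the subnet gives $\sigma(t)^* X \sigma(t) = X$, i.e.\ $X\sigma(t) = \sigma(t)X$.

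The only non-routine input is Day's theorem producing the approximately right-invariant net; once $(\mu_\alpha)$ is in hand, both conclusions fall out of the single calculation above. An equivalent, more hands-on route would construct $X$ directly from $\psi$ via the sesquilinear form $\langle X\xi, \eta\rangle := \psi\bigl(s \mapsto \langle \sigma(s)^* x \sigma(s)\xi, \eta\rangle\bigr)$, deducing commutation from right invariance of $\psi$ and membership in $\co\{\sigma(s)^* x \sigma(s) : s \in S\}$ by Hahn--Banach separation against normal functionals, with the mild technical point that the separating functional can be taken in $\mathcal{B}(\mathcal{H})_*$ since the convex set is bounded.
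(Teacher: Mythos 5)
Your primary argument is correct, and it takes a genuinely different route from the paper. The paper constructs a candidate $y$ directly from the right invariant mean $\psi$ via the trace pairing on $\mathfrak{S}_1(\mathcal{H})$; right invariance of $\psi$ immediately gives $y\in\{\sigma(s)\}'$, and membership of $y$ in $\co\{\sigma(s)^*x\sigma(s)\}$ is then proved by contradiction using Hahn--Banach separation against a trace-class functional together with the Kre\v{\i}n--Mil'man theorem (so that $\psi$ lies in the weak$^*$ closed convex hull of point evaluations). You instead invoke Day's asymptotic invariance theorem to replace $\psi$ by a net of finitely supported probability measures $\mu_\alpha$ with $\|r_t^*\mu_\alpha-\mu_\alpha\|_1\to 0$, average the orbit against these, and read off both conclusions from a single operator-norm estimate plus a Banach--Alaoglu weak cluster point. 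Your route front-loads the nontrivial input into Day's theorem (which itself hides a Goldstine/Mazur argument not unlike the separation the paper performs at the end), in exchange for a cleaner and more constructive finish: membership in $\co\{\cdots\}$ is immediate from the construction, and commutation drops out of the norm estimate without a separate separation argument. The paper's proof stays entirely inside $\mathcal{B}(\mathcal{H})\cong\mathfrak{S}_1(\mathcal{H})^*$ and only uses Hahn--Banach and Kre\v{\i}n--Mil'man as black boxes. Note that the ``alternative, more hands-on route'' you sketch at the end---defining $X$ through the sesquilinear form $\langle X\xi,\eta\rangle:=\psi\bigl(s\mapsto\langle\sigma(s)^*x\sigma(s)\xi,\eta\rangle\bigr)$ and then separating against normal functionals---is in fact essentially the paper's own proof, up to the bookkeeping of trace pairing versus rank-one vector functionals. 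One small point worth flagging for completeness: since $S$ is a semigroup (right translation $s\mapsto st$ need not be injective), the ``push-forward'' $r_t^*\mu_\alpha$ should be understood as $\mu_\alpha*\delta_t$, which your identity $\sigma(t)^*X_\alpha\sigma(t)=\sum_u(r_t^*\mu_\alpha)(u)\,\sigma(u)^*x\sigma(u)$ uses correctly, and the relevant semigroup version of Day's theorem (Day 1957) does supply exactly the net you need.
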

\begin{proof}
Let $\psi$ be a right invariant mean on $S$.
Fix $x \in \mathcal{B(H)}$.
For $\xi\in \mathfrak{S}_1(\mathcal{H})$, define $f_{\xi} \in \ell^\infty(S)$
by $f_{\xi}(s)={\rm Tr}(\sigma(s)^*x\sigma(s) \xi), s\in S$.
Then there exists $y\in \mathcal{B(H)} \cong \mathfrak{S}_1(\mathcal{H})^*$ such that
${\rm Tr}(y\xi)=\psi(f_{\xi})$ for $\xi \in \mathfrak{S}_1(\mathcal{H})$.
Since $f_{\sigma(s) \xi \sigma(s)^*}= r_s f_\xi$ for $s\in S$
and $\psi$ is right invariant, we have
$
{\rm Tr}(\sigma(s)^* y \sigma(s) \xi)=\psi(f_{\sigma(s) \xi \sigma(s)^*}) = \psi(f_{\xi})={\rm Tr}(y \xi),
$
which implies that $y\in \{\sigma(s) \,|\, s\in S \}'$.
Suppose that $ y \notin \co \{\sigma(s)^*x\sigma(s) \,|\, s\in S \} $.
By the Hahn-Banach separation theorem, there exist $\xi \in \mathfrak{S}_1(\mathcal{H})$ and a constant $c \in \mathbb{R}$
such that ${\rm Re \,} \psi(f_\xi) = {\rm Re\,Tr}(y\xi)< c \leq  {\rm Re\,Tr}(\sigma(s)^*x\sigma(s))= {\rm Re \,} f_\xi(s) $ for $s\in S$.
However, by the Kre\v{\i}n-Mil'man theorem, $\psi$ falls in the weak$^*$ closed convex hull of $S \subset \ell^\infty(S)^*$, a contradiction.
\end{proof}

The following proposition gives us a Brown-Halmos type criterion (\cite[Theorem 6 and 7]{Br-Ha}).
\begin{proposition}\label{BH}
Let $M\subset \mathcal{B(H)},S\subset G,\alpha$ be as in Definition \ref{defsc}.
Then the following are true.
\begin{itemize}
\item[(i)] For a given $X\in \mathcal{B}(\mathbb{H}^2)$, $X$ falls in $T(\rcrpr{M'}{\alpha}{G})$ $($resp. $T(\lcrpr{G}{\alpha}{M'}))$ if and only if
$X$ commutes with $T(\pi(M))$ $($resp. $M\otimes \mathbb{C}1_{\ell^2(S)})$ and satisfies that $T_{\lambda(s)}^*XT_{\lambda(s)}=X$ $($resp. $T_{\rho(s)}^*XT_{\rho(s)}=X)$ for every $s\in S$.
\item[(ii)] $T(\rcrpr{M'}{\alpha}{S})= \crpr{M'}{\alpha}{S}=(\crpr{S}{\alpha}{M})'$ and $T(\lcrpr{S}{\alpha}{M'}) = \crpr{S}{\alpha}{M'}=(\crpr{M}{\alpha}{S})'$.
\end{itemize}
\end{proposition}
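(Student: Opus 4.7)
The plan is to prove (i) directly—with the forward implication being immediate from Remark \ref{reminc} and the converse requiring an averaging argument via Proposition \ref{lemce}—and then to deduce (ii) by chaining inclusions and strengthening the invariance hypothesis in (i) to the full commutation with $T_{\lambda(s)}$. I treat only the ``right'' case, the ``left'' version following from the unitary equivalence of Remark \ref{remuni}.

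For the forward direction of (i), if $X=T_Y$ with $Y\in \rcrpr{M'}{\alpha}{G}=(\lcrpr{G}{\alpha}{M})'$, then $Y$ commutes with $\pi(M)$ and $\lambda(G)$; since $P\in \pi(M)'$ this descends to $XT_{\pi(x)}=T_{\pi(x)}X$, and Remark \ref{reminc} immediately yields $T_{\lambda(s)}^{*}XT_{\lambda(s)}=T_{\lambda(s)^{*}Y\lambda(s)}=T_Y=X$. For the converse, set $\hat X:=X\oplus 0\in \mathcal{B}(\mathbb{L}^2)$. Writing $\hat X=XP$, the property $P\in \pi(M)'$ transfers the commutation from $X$ to $\hat X$, so $\hat X\in \pi(M)'$. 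Remark \ref{reminc} combined with the hypothesis gives $T_{\lambda(s)^{*}\hat X\lambda(s)}=V_s^{*}XV_s=X$ for every $s\in S$, where $V_s:=T_{\lambda(s)}$. I then apply Proposition \ref{lemce} to the unitary representation $s\mapsto \lambda(s)$ of $S$ on $\mathbb{L}^2$ with input $\hat X$ to extract
\[
Y\;\in\;\{\lambda(s)\,|\,s\in S\}'\;\cap\;\co\{\lambda(s)^{*}\hat X\lambda(s)\,|\,s\in S\}.
\]
Since $G=S^{-1}S$, the relation $Y\in \lambda(S)'$ upgrades to $Y\in \lambda(G)'$; since $\lambda(s)$ normalises $\pi(M)$ by covariance and $\hat X\in \pi(M)'$, every conjugate $\lambda(s)^{*}\hat X\lambda(s)$, and hence $Y$ by convexity and weak closure, remains in $\pi(M)'$. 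Thus $Y\in \pi(M)'\cap \lambda(G)'=\rcrpr{M'}{\alpha}{G}$. The $\sigma$-weak continuity of $T$ on bounded sets together with $T_{\lambda(s)^{*}\hat X\lambda(s)}=X$ then forces $T_Y=X$.

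For (ii), I establish the chain $T(\rcrpr{M'}{\alpha}{S})\subset \crpr{M'}{\alpha}{S}\subset (\crpr{S}{\alpha}{M})'\subset T(\rcrpr{M'}{\alpha}{S})$. The first inclusion is contained in Remark \ref{reminc}. The second is a direct check on generators: $M'\otimes 1$ commutes with $T_{\pi(x)}$ and $T_{\lambda(s)}$ by separation of tensor factors, while for $s,s'\in S$ each of $\rho(s),\pi(x),\lambda(s')$ preserves $\mathbb{H}^2$, so multiplicativity of $T$ on such products together with $\rho(s)\in \pi(M)'\cap \lambda(G)'$ yields $T_{\rho(s)}\in(\crpr{S}{\alpha}{M})'$. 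For the third inclusion, fix $X\in (\crpr{S}{\alpha}{M})'$; the commutation $XT_{\lambda(s)}=T_{\lambda(s)}X$ forces $V_s^{*}XV_s=X$, so (i) provides $Y\in \rcrpr{M'}{\alpha}{G}$ with $X=T_Y$. Unpacking the full commutator using $\lambda(s)P=P\lambda(s)P$ produces $P\lambda(s)(1-P)Y\eta=0$ for every $\eta\in \mathbb{H}^2$ and $s\in S$, so $(1-P)Y\mathbb{H}^2\subset \bigcap_{s\in S}\lambda(-s)\mathbb{H}^{2\perp}=\{0\}$—the intersection being trivial because $S-S=G$—and hence $Y\mathbb{H}^2\subset \mathbb{H}^2$. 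By Proposition \ref{prop} this is equivalent to $Y\in \rcrpr{M'}{\alpha}{S}$, closing the chain.

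The main obstacle is the averaging step in the converse of (i): the averaged operator $Y$ must simultaneously lie in $\lambda(G)'$, remain in $\pi(M)'$, and compress back to $X$ under $T$. These three properties rely on the three distinct ingredients $S^{-1}S=G$, the covariance relation $\lambda(s)\pi(x)\lambda(s)^{*}\in \pi(M)$, and the $\sigma$-weak continuity of $T$ together with the invariance $V_s^{*}XV_s=X$.
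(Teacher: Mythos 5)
Your proof is correct and follows essentially the same route as the paper: the converse of (i) via the amenable averaging in Proposition~\ref{lemce}, and (ii) by combining (i) with the Fourier-type vanishing forced by $G=S^{-1}S$ and Proposition~\ref{prop}. The only cosmetic difference is in the last step of (ii), where you deduce $(1-P)YP=0$ from the triviality of $\bigcap_{s\in S}\lambda(-s)\,\mathbb{H}^{2\perp}$, whereas the paper computes the matrix coefficient $\langle a\,\xi\otimes\delta_\iota,\eta\otimes\delta_g\rangle$ directly; both are the same observation expressed differently.
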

\begin{proof}
Let $X\in T(\pi(M))'$ be arbitrarily chosen in such a way that $T_{\lambda(s)}^*XT_{\lambda(s)}=X$ for every $s\in S$.
Since $T(\pi(M))'=T(\pi(M)')$, there exists $x\in \pi(M)'$ such that $X=T_x$.
By Proposition \ref{lemce}, we find $y$ in $\co \{ \lambda(s)^* PxP \lambda(s) \,|\, s\in S \} \cap \{\lambda(s) \,|\, s\in S \}'$.
Note that $y$ is in $\rcrpr{M'}{\alpha}{G}=(\lcrpr{G}{\alpha}{M})'=\pi(M)'\cap \{\lambda(g)\,|\, g\in G\}'$
since $PxP$ is in $\pi(M)'$, the $\lambda(s), s\in S$, normalize $\pi(M)'$, and $S$ generates $G$.
Since the Toeplitz map $T$ is $\sigma$-weakly continuous, one has
$
T_y \in \co \{T_{\lambda(s)}^* X T_{\lambda(s)} \,|\, s\in S \} = \{X\},
$
which implies that $X=T_y \in T(\rcrpr{M'}{\alpha}{G})$.
Conversely, let $x$ be in $\rcrpr{M'}{\alpha}{G}\subset \pi(M)'$.
Then $T_x\in T(\pi(M)')=T(\pi(M))'$.
For any $s\in S$, we have $T_{\lambda(s)}^*T_xT_{\lambda(s)}=T_{\lambda(s)^*x\lambda(s)}=T_x$,
which implies (i).

To see (ii) it suffices to prove that $(\crpr{S}{\alpha}{M})'= T(\rcrpr{M'}{\alpha}{S})$ by Remark \ref{reminc}.
It is immediate to see that $T(\rcrpr{M'}{\alpha}{S}) \subset (\crpr{S}{\alpha}{M})' $.
Conversely, let $Y\in (\crpr{S}{\alpha}{M})'$ be arbitrary.
By the preceding paragraph, there exists $a \in \rcrpr{M'}{\alpha}{G}$ such that
$Y=T_a$. 
By the assumption that $G=S^{-1}S$,
for each $g\in G\setminus S$ there exist $s,t\in S$ such that $g=s^{-1}t$.
Since $a$ commutes with $\lambda(s), s\in S$, we have 
\begin{align*}
\langle a \xi\otimes \delta_\iota , \eta\otimes \delta_g \rangle 
&=\langle (1-P) a P \xi\otimes \delta_\iota, \eta\otimes \delta_{s^{-1}t} \rangle 
=\langle P \lambda(s) (1-P) a P\xi \otimes \delta_\iota, \eta \otimes \delta_t \rangle \\
&=\langle [T_a,T_{\lambda(s)}] \xi\otimes \delta_\iota, \eta\otimes \delta_t \rangle 
=0
\end{align*}
for every $\xi,\eta \in \mathcal{H}$,
and hence $(1\otimes e_g) a (1\otimes e_\iota) = 0$.
Therefore, $a$ falls in $\rcrpr{M'}{\alpha}{S}$ by Proposition \ref{prop}.
\end{proof}

%
%
\section{Proof of Theorem \ref{thmd}}
Throughout this section, let $M\subset \mathcal{B(H)}$ denote a von Neumann algebra, $\alpha:G\act N$ a spatial action of discrete countable abelian group, and $S$ a subsemigroup of $G$ which contains the unit $\iota$ of $G$ and generates $G$.

\begin{lemma}\label{lemnocpt}
If $M'$ $($resp. $M)$ contains no non-zero compact operators,
then so does $\pi(M)'$ $($resp. $(M'\otimes \mathbb{C}1)')$.
\end{lemma}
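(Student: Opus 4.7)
The plan is to view every operator on $\mathbb{L}^2=\mathcal{H}\otimes \ell^2(G)$ as a $G\times G$ matrix with entries in $\mathcal{B(H)}$. For $Y\in \mathcal{B}(\mathbb{L}^2)$ and $g,h\in G$, let $Y_{g,h}\in \mathcal{B(H)}$ be the unique operator such that $(1\otimes e_g)Y(1\otimes e_h)=Y_{g,h}\otimes e_{g,h}$, where $e_{g,h}\in \mathcal{B}(\ell^2(G))$ is the rank-one partial isometry sending $\delta_h$ to $\delta_g$. A routine observation that I will use repeatedly is that if $Y$ is compact on $\mathbb{L}^2$, then each block $Y_{g,h}$ is compact on $\mathcal{H}$, since $Y_{g,h}$ is obtained from $Y$ by compression with rank-one projections in the $\ell^2(G)$-factor.

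For the first assertion I would compute $\pi(M)'$ block-wise. Since $\pi(x)=\sum_{g\in G}\alpha_g(x)\otimes e_g=\sum_{g\in G} u_g x u_g^*\otimes e_g$, a direct calculation shows that $Y\in \pi(M)'$ if and only if
\begin{equation*}
\alpha_g(x)Y_{g,h}=Y_{g,h}\alpha_h(x) \quad (x\in M,\; g,h\in G),
\end{equation*}
which by the spatiality of $\alpha$ is equivalent to $u_g^*Y_{g,h}u_h \in M'$ for every $g,h\in G$. Now suppose $Y \in \pi(M)' \cap \mathcal{K}(\mathbb{L}^2)$. Then $u_g^*Y_{g,h}u_h$ lies in $M'$ and is compact (being the product of the compact $Y_{g,h}$ with unitaries), so the hypothesis $M'\cap \mathcal{K(H)}=\{0\}$ forces $Y_{g,h}=0$ for all $g,h\in G$, hence $Y=0$.

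For the second assertion I would invoke the commutation theorem for tensor products to write $(M'\otimes \mathbb{C}1)'=M\bar\otimes \mathcal{B}(\ell^2(G))$. For $Y\in M\bar\otimes \mathcal{B}(\ell^2(G))$, applying the normal slice map ${\rm id}\otimes \omega_{\delta_g,\delta_h}$ recovers the block $Y_{g,h}$ as an element of $M$. If in addition $Y$ is compact, each $Y_{g,h}$ is compact, so $Y_{g,h}\in M\cap \mathcal{K(H)}=\{0\}$ and therefore $Y=0$.

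Both halves are fairly mechanical once the matrix picture is set up; the only point that requires care is the characterization of $\pi(M)'$ in matrix coordinates, and I do not anticipate any real obstacle beyond keeping the conjugations by $u_g$ straight.
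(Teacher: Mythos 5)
Your proof is correct, and it rests on the same block-decomposition idea as the paper's: compress a compact operator to its $(g,h)$-block, observe that the block is compact on $\mathcal{H}$, and identify that block (after suitable conjugation) as an element of $M'$ (resp.\ $M$). The one place you diverge is in handling the first assertion: the paper invokes the intertwining unitary $W$ from Remark~\ref{remuni} to replace $\pi(M)'$ by $(M\otimes\mathbb{C}1)'=M'\bar\otimes\mathcal{B}(\ell^2(G))$, and then runs the block argument once; you instead compute the blocks of $\pi(M)'$ directly, getting the condition $u_g^*Y_{g,h}u_h\in M'$ and carrying the $u_g$-conjugation through by hand. Your route avoids the unitary reduction (and thus doesn't need Remark~\ref{remuni}) at the small cost of tracking those conjugations explicitly, while the paper's route avoids the $u_g$'s at the cost of appealing to $W$; both cash out to the same observation, namely that each block lands in $M'\cap\mathcal{K}(\mathcal{H})=\{0\}$ (or $M\cap\mathcal{K}(\mathcal{H})=\{0\}$). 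Either way the argument is complete and correct.
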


\begin{proof}
Assume that $M'\cap \mathcal{K}=\{0 \}$.
By Remark \ref{remuni} it suffices to prove that $(M\otimes \mathbb{C}1)'\cap \mathcal{K}(\mathbb{L}^2)=\{0\}$.
Let $K\in (M\otimes \mathbb{C}1)'\cap \mathcal{K}(\mathbb{L}^2)$ be arbitrary chosen.
Since $(M\otimes \mathbb{C}1)'=M'\bar{\otimes}\mathcal{B}(\ell^2(G))$,
the $(1\otimes \lambda^*_g e_g) K (1\otimes e_h\lambda_h)|_{\mathcal{H}\otimes \mathbb{C}\delta_\iota}$ falls in $(M'\otimes \mathbb{C}e_\iota) \cap \mathcal{K}(\mathcal{H}\otimes \mathbb{C}\delta_\iota) \cong M'\cap \mathcal{K(H)}=\{0 \}$ for every $g,h \in G$, which implies $K=0$.
\end{proof}

\begin{lemma}
The restrictions of the Toeplitz map to $\{\lambda(g) \,|\, g\in G \}'$ and $\{ \rho(h) \,|\, h\in G  \}'$ are isometries.
Consequently, every isometry in $\crpr{S}{\alpha}{M}$ and $\crpr{M}{\alpha}{S}$ is
of the form $T_v$ with some isometry $v$ in $\lcrpr{S}{\alpha}{M}$ and $\rcrpr{M}{\alpha}{S}$, respectively.
\end{lemma}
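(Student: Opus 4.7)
The plan is to treat the two assertions separately, exploiting the translation structure of $S$ in $G$. For the first assertion, the bound $\|T_X\| \le \|X\|$ is immediate; for the reverse, I use a shift trick. Given $X$ commuting with every $\lambda(g)$ and $\xi \in \mathbb{L}^2$ with finite support $F \subset G$, I first truncate $X\xi$ to a finitely supported vector $\eta_\epsilon$ with $\|X\xi - \eta_\epsilon\| < \epsilon$. Since $G = S^{-1}S$ is abelian, for any finite $F'' \subset G$ one can find $s \in S$ with $sF'' \subset S$ (write each $f = s_f^{-1}t_f$ with $s_f, t_f \in S$ and take $s = \prod_{f} s_f$). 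Applying this to $F$ together with the support of $\eta_\epsilon$ yields $s$ making both $\lambda(s)\xi$ and $\lambda(s)\eta_\epsilon$ lie in $\mathbb{H}^2$. The commutation $X\lambda(s) = \lambda(s)X$ then gives $T_X\lambda(s)\xi = P\lambda(s)X\xi$, and approximation by $\lambda(s)\eta_\epsilon$ yields $\|T_X\lambda(s)\xi\| \geq \|X\xi\| - 2\epsilon$; since $\|\lambda(s)\xi\| = \|\xi\|$, letting $\epsilon \to 0$ gives $\|T_X\| \geq \|X\|$. The $\{\rho(h)\}'$ case is handled identically (or by conjugation with $\hat{W}$ from Remark \ref{remuni}).

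For the consequence, let $V \in \crpr{S}{\alpha}{M}$ be an isometry. By Proposition \ref{BH}(ii), $V$ commutes with $T_{\rho(s)}$ for every $s \in S$, and since $\rho(s)$ preserves $\mathbb{H}^2$ this reads $V\rho(s) = \rho(s)V$ on $\mathbb{H}^2$. I lift $V$ to an operator $v$ on $\mathbb{L}^2$ by $\rho$-covariant extension: for $\xi$ of finite support, pick $s \in S$ with $\rho(s)\xi \in \mathbb{H}^2$ and set $v\xi := \rho(s)^*V\rho(s)\xi$. Well-definedness (independence of $s$) is checked by applying $V\rho(t) = \rho(t)V|_{\mathbb{H}^2}$ to both candidate shifts and using abelianness of $\rho$; and $\|v\xi\| = \|V\rho(s)\xi\| = \|\xi\|$ (using $V$ isometric) makes $v$ an isometry on a dense subspace, extending to an isometry on $\mathbb{L}^2$. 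By construction $v|_{\mathbb{H}^2} = V$, hence $T_v = V$.

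It remains to verify $v \in \lcrpr{S}{\alpha}{M}$. Since $\lcrpr{G}{\alpha}{M} = (\rcrpr{M'}{\alpha}{G})'$, I check that $v$ commutes with the generators $\rho(g)$ ($g \in G$) and $y \otimes 1$ ($y \in M'$) of $\rcrpr{M'}{\alpha}{G}$. Commutation with $\rho(g)$ for arbitrary $g$ follows by choosing $s \in S$ large enough to shift both $\xi$ and $\rho(g)\xi$ into $\mathbb{H}^2$ (writing $g = t_1^{-1}t_2$ and taking $s \in t_1 S$). Commutation with $y \otimes 1$ uses the identity $(y \otimes 1)\rho(s) = \rho(s)(\alpha_s(y) \otimes 1)$ together with $V \in (M' \otimes 1_{\ell^2(S)})'$, which follows from Proposition \ref{BH}(ii) since $M' \otimes 1_{\ell^2(S)} \subset T(\rcrpr{M'}{\alpha}{S}) = (\crpr{S}{\alpha}{M})'$. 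Finally, $v(1 \otimes e_\iota) = V(1 \otimes e_\iota) \subset \mathbb{H}^2$ together with Proposition \ref{prop} places $v$ in $\lcrpr{S}{\alpha}{M}$. The right-hand statement follows by conjugation with $\hat{W}$. The principal technical obstacle is establishing well-definedness of the $\rho$-covariant extension, which hinges crucially on $V$ intertwining every $T_{\rho(s)}$, $s \in S$; without the commutant identification from Proposition \ref{BH}(ii), the naive extension would depend on the choice of shift.
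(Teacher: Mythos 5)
Your proof is correct, but it takes a genuinely different route from the paper in both halves of the lemma. For the isometry statement, the paper first proves injectivity of $T$ on $\{\lambda(g)\}'$ by a direct matrix-entry computation, then invokes Proposition~\ref{lemce} (the right-invariant-mean argument) to produce $y\in\co\{\lambda(s)^*PxP\lambda(s)\}\cap\{\lambda(g)\}'$ with $T_y=T_x$ and $\|y\|\leq\|T_x\|$, concluding $x=y$ by the already-established injectivity. Your shift-and-truncate argument bypasses Proposition~\ref{lemce} entirely: using $G=S^{-1}S$ to produce $s\in S$ moving a finite set into $S$, you get $\|T_X\lambda(s)\xi\|\geq\|X\xi\|-2\epsilon$ directly from $X\lambda(s)=\lambda(s)X$, which is more elementary (no amenability of $S$, no Krein--Mil'man/Hahn--Banach) and simultaneously yields injectivity. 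For the second statement, the paper's proof is very short: take $v\in\lcrpr{S}{\alpha}{M}$ with $T_v=V$ from Proposition~\ref{BH}, compute $T_{v^*v}=T_v^*T_v=V^*V=1=T_1$ via the multiplicativity $T_x^*T_YT_z=T_{x^*Yz}$ of Remark~\ref{reminc}, and conclude $v^*v=1$ from injectivity. Your $\rho$-covariant extension instead rebuilds $v$ from scratch, checks well-definedness via the covariance $V\rho(s)=\rho(s)V$ on $\mathbb{H}^2$ (which is indeed correct: for two admissible shifts $s,t$ both sides equal $\rho(st)^*V\rho(st)\xi$), verifies $v\in(\rcrpr{M'}{\alpha}{G})'$, and lands in $\lcrpr{S}{\alpha}{M}$ via Proposition~\ref{prop}. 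This is considerably longer, and since it still relies on Proposition~\ref{BH}(ii) (to know $V$ commutes with $T_{\rho(s)}$ and with $M'\otimes 1_{\ell^2(S)}$) it buys nothing in terms of prerequisites; its only advantage is that it avoids explicit use of the identity $T_v^*T_v=T_{v^*v}$. Given the first half of the lemma, the paper's one-line injectivity argument is the shorter path, but yours is a valid alternative.
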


\begin{proof} 
Firstly, we prove that the restriction of $T$ to $\{\lambda(g) \,|\, g\in G  \}'$ is injective.
Let $x\in \{\lambda(g) \,|\, g\in G \}'$ be chosen in such a way that $T_x=0$.
Since $S$ generates $G$ and $G$ is abelian,
for each $g,h\in G$ there exist $s,s',t,t' \in S$ such that $g=t^{-1}s$ and $h=t'^{-1}s'$.
Since $x$ commutes with $\lambda(g),g\in G$,
one has 
\begin{align*}
\langle x(\xi\otimes \delta_{t^{-1}s}),\eta\otimes \delta_{t'^{-1}s'}\rangle 
&= \langle x (\xi\otimes \lambda_t^* \delta_{s}), \eta\otimes \lambda _{t'}^*\delta_{s'}\rangle 
= \langle x(\xi\otimes \lambda_{t'} \delta_{s}), \eta\otimes \lambda _{t}\delta_{s'}\rangle \\
&= \langle T_x (\xi\otimes \delta_{t's}), \eta\otimes \delta_{ts'} \rangle
= 0
\end{align*}
for every $\xi,\eta \in \mathcal{H}$, and hence $x=0$.
By Proposition \ref{lemce} there exists $y \in \co \{ \lambda(s)^* PxP \lambda(s) \,|\, s\in S \} \cap \{\lambda(g) \,|\, g\in G \}'$.
Note that $\| y\| \leq \| T_x\|$.
Since $P\lambda(s) P = \lambda(s) P$ for $s \in S$,
one has
$$
T_y \in \co\{ T_{\lambda(s)}^* T_x T_{\lambda(s)} \,|\, s\in S \} =  \co\{T_{\lambda(s)^*x\lambda(s)} \,|\, s\in S \} = \{T_x \}.
$$
Since $T$ is injective on $\{\lambda(g) \,|\, g\in G  \}'$, 
it follows that $x=y$.
Thus $\| x\| \leq \| T_x\| \leq \| x\|$, and hence the restriction of $T$ to $\{ \lambda(g) \,|\, g\in G\}' $ is isometric.

To see the second assertion let $V \in \mathcal{I}(\crpr{S}{\alpha}{M})$ be arbitrary.
By Proposition \ref{BH} we find $v\in \lcrpr{S}{\alpha}{M}$ in such a way that $V=T_v$.
Then one has $1=V^*V=T_{v}^*T_v=T_{v^*v}$. Since $T$ is injective on $\lcrpr{S}{\alpha}{M}\subset \{\rho(g)\,|\,g\in G\}'$, $v$ itself must be isometry. 
Similarly, one can prove the same assertion for $\{ \rho(g) \,|\, g\in G \}'$ and $\crpr{M}{\alpha}{S}$.
\end{proof}

By the preceding lemma, it is immediate to see that $T(\rcrpr{M'}{\alpha}{G}) + \mathfrak{S}_p$ is contained in
$\efp(\crpr{S}{\alpha}{M})$.
Hence the next theorem completes the proof of Theorem \ref{thmd}.

\begin{theorem}
Assume that $M'\cap \mathcal{K}=\{0\}$.
For a given $X\in \mathcal{B}(\mathbb{H}^2)$ the following are true.
\begin{itemize}
\item[(i)] The $X$ falls in $T(\rcrpr{M'}{\alpha}{G}) +\mathfrak{S}_p$ if and only if
$X$ commutes with $T(\pi(M))$ modulo $\mathfrak{S}_p$
and satisfies that $T_{\lambda(s)}^*XT_{\lambda(s)}-X \in \mathfrak{S}_p$ for every $s\in S$.
\item[(ii)] The $X$ falls in $\crpr{M'}{\alpha}{S}+\mathfrak{S}_p$ if and only if $X$ commutes with $T(\pi(M))$ and $\{T_{\lambda(s)} \,|\, s\in S\}$ modulo $\mathfrak{S}_p$.
\end{itemize}

\end{theorem}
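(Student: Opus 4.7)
\emph{Plan.}  The ``only if'' directions of (i) and (ii) are immediate: elements of $T(\rcrpr{M'}{\alpha}{G})$ (resp.\ of $\crpr{M'}{\alpha}{S}=T(\rcrpr{M'}{\alpha}{S})$) commute exactly with $T(\pi(M))$ and are $T_{\lambda(s)}$-fixed (resp.\ $T_{\lambda(s)}$-commuting) by Proposition \ref{BH} and Remark \ref{reminc}, and a compact perturbation contributes only an $\mathfrak{S}_p$-error.  I focus on the ``if'' direction.

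For (i), first apply the Johnson--Parrott--Popa--Hoover theorem to the von Neumann algebra $T(\pi(M))\subset\mathcal{B}(\mathbb{H}^2)$: the commutation hypothesis yields a decomposition $X=T_{x_0}+K_0$ with $x_0\in\pi(M)'$ and $K_0\in\mathfrak{S}_p$.  Using Remark \ref{reminc} (applied to $\lambda(s)\in\lcrpr{S}{\alpha}{M}$), the remaining hypothesis rewrites as
\begin{equation*}
T_{\lambda(s)^*x_0\lambda(s)-x_0}\in\mathfrak{S}_p,\qquad s\in S,
\end{equation*}
and the argument of $T$ still lies in $\pi(M)'$ because $\lambda(s)$ normalizes $\pi(M)$.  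The key observation is a \emph{no-compacts principle}: any $z\in\pi(M)'$ with $T_z\in\mathfrak{S}_p$ must satisfy $T_z=0$.  This follows by trivializing $\pi$ via the unitary $W$ of Remark \ref{remuni}, under which $\pi(M)'\cong M'\bar\otimes\mathcal{B}(\ell^2(G))$; the compression to $\mathbb{H}^2$ preserves matrix entries, each of which lies in $M'$, so $\mathfrak{S}_p$-compactness of $T_z$ combined with $M'\cap\mathcal{K}=\{0\}$ kills every entry indexed by $S\times S$.  Applied to $z=\lambda(s)^*x_0\lambda(s)-x_0$, this yields $T_{\lambda(s)}^*T_{x_0}T_{\lambda(s)}=T_{x_0}$ \emph{exactly}; together with $[T_{x_0},T(\pi(M))]=0$, Proposition \ref{BH}(i) places $T_{x_0}$ in $T(\rcrpr{M'}{\alpha}{G})$.

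For (ii), the identity $T_{\lambda(s)}^*T_{\lambda(s)}=1$ (Remark \ref{reminc}) shows that $[X,T_{\lambda(s)}]\in\mathfrak{S}_p$ implies $T_{\lambda(s)}^*XT_{\lambda(s)}-X\in\mathfrak{S}_p$, so (i) already provides $X=T_y+K$ with $y\in\rcrpr{M'}{\alpha}{G}$ and $K\in\mathfrak{S}_p$.  The extra information $[T_y,T_{\lambda(s)}]\in\mathfrak{S}_p$ unpacks, using $y\lambda(s)=\lambda(s)y$ and $\lambda(s)P=P\lambda(s)P$ for $s\in S$, into
\begin{equation*}
[T_y,T_{\lambda(s)}]=P\lambda(s)(1-P)yP|_{\mathbb{H}^2}\in\mathfrak{S}_p.
\end{equation*}
I then adapt the matrix-entry calculation from the proof of Proposition \ref{BH}(ii): for every $g\in G\setminus S$, write $g=s^{-1}t$ with $s,t\in S$ (using $G=S^{-1}S$); the $(t,\iota)$ matrix entry of the commutator above equals, up to a rank-one tensor in the $\ell^2(S)$-variable, the $(g,\iota)$ matrix entry $y_{g,\iota}\in\mathcal{B}(\mathcal{H})$ of $y$, which must therefore lie in $\mathfrak{S}_p(\mathcal{H})$.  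The commutation $y\pi(x)=\pi(x)y$ translates into $u_g^*y_{g,\iota}\in M'$, hence $u_g^*y_{g,\iota}\in M'\cap\mathcal{K}=\{0\}$, i.e.\ $y_{g,\iota}=0$.  Consequently $(1-P)y(1\otimes e_\iota)=0$, and Proposition \ref{prop} puts $y$ in $\rcrpr{M'}{\alpha}{S}$, whence $X\in T(\rcrpr{M'}{\alpha}{S})+\mathfrak{S}_p=\crpr{M'}{\alpha}{S}+\mathfrak{S}_p$ by Proposition \ref{BH}(ii).

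I expect the main technical point to be the matrix-entry extraction in (ii): one must track carefully how the commutation with $\pi(M)$ constrains each $y_{g,\iota}$ to lie in $u_gM'$, so that $\mathfrak{S}_p$-smallness can be promoted to exact vanishing via $M'\cap\mathcal{K}=\{0\}$.  Isolating this step as the same no-compacts principle already used in (i) should keep the write-up clean.
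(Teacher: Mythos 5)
Your proof is correct and follows essentially the same path as the paper's: decompose $X=T_{x_0}+K_0$ via the Johnson--Parrott--Popa--Hoover theorem, use the fact that $\pi(M)'$ contains no nonzero compacts (the paper's Lemma \ref{lemnocpt}, which is exactly your ``no-compacts principle'' in disguise) to promote the modulo-$\mathfrak{S}_p$ relation $T_{\lambda(s)}^*T_{x_0}T_{\lambda(s)}-T_{x_0}\in\mathfrak{S}_p$ to an exact identity and invoke Proposition \ref{BH}, and then in (ii) reduce to $T_y$ and kill the off-diagonal entries $(1\otimes e_g)y(1\otimes e_\iota)$ for $g\notin S$ by the same compactness argument followed by Proposition \ref{prop}. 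Your commutator identity $[T_y,T_{\lambda(s)}]=P\lambda(s)(1-P)yP|_{\mathbb{H}^2}$ and the matrix-entry extraction are precisely the inner-product computation carried out in the paper's proof of Proposition \ref{BH}(ii) and re-used in the theorem's proof.
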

\begin{proof}
First, we prove the `if' part of (i).
Let $X \in \ecp(T(\pi(M)))$ be arbitrarily chosen in such a way that $T_{\lambda(s)}^*XT_{\lambda(s)}-X \in \mathfrak{S}_p$ for every $s\in S$.
By Remark \ref{reminc} there exist $a\in\pi(M)'$ and $K\in \mathfrak{S}_p$ such that  $X=T_a+K$.
Note that $(T_a - T_{\lambda(s)}^*T_aT_{\lambda(s)})P=P(a-\lambda(s)^*a \lambda(s))P \in \pi(M)'$
since $a,P \in \pi(M)'$ and $\lambda(s),s\in S,$ normalize $\pi(M)'$.
For each $s\in S$ one has
$$
(T_a - T_{\lambda(s)}^*T_aT_{\lambda(s)})P
= (X- T_{\lambda(s)}^*X T_{\lambda(s)})P - (K - T_{\lambda(s)}^*K T_{\lambda(s)})P
$$
falls into $\mathcal{K}\cap \pi(M)'$ by assumption.
Thus, it follows from Lemma \ref{lemnocpt} that $T_{\lambda(s)}^*T_aT_{\lambda(s)}$ $=T_a$  for every $s\in S$, and hence we have $T_a \in T(\rcrpr{M'}{\alpha}{G})$ by Proposition \ref{BH}.
Therefore, the $X=T_a+K$ is in $T(\rcrpr{M'}{\alpha}{G}) + \mathfrak{S}_p$.
The `only if' part of (i) follows from Proposition \ref{BH} again.

Next, we prove (ii). 
Since $\crpr{M'}{\alpha}{S} = (\crpr{S}{\alpha}{M})'$, the `only if' part is trivial.
By the preceding paragraph, it suffices to prove that every $T_y\in T(\rcrpr{M'}{\alpha}{G})$ commuting with $\{T_{\lambda(s)} \,|\, s\in S\}$ modulo $\mathfrak{S}_p$ falls in $\crpr{M'}{\alpha}{S}=T(\rcrpr{M'}{\alpha}{S})$.
By Proposition \ref{prop},
it suffices to show that $(1\otimes e_g) y (1\otimes e_\iota) = 0$ for every $g\in G\setminus S$.
Note that $(1\otimes e_g) y (1\otimes e_\iota)\in \pi(M)'$ since $(1\otimes e_h)\in \pi(M)'$ for every $h\in G$.
As in the proof of Proposition \ref{BH} we can find $s,t\in S$ in such a way that $g=s^{-1}t$, 
and have
$\langle y \xi\otimes \delta_\iota , \eta\otimes \delta_g \rangle =\langle [T_y,T_{\lambda(s)}] \xi\otimes \delta_\iota, \eta\otimes \delta_t \rangle$
for every $\xi,\eta \in \mathcal{H}$,
which implies that $(1\otimes e_g) y (1\otimes e_\iota)$ is compact,
and hence $(1\otimes e_g)y(1\otimes e_\iota) \in \pi(M)'\cap \mathcal{K}=\{ 0 \}$.
Therefore, we get $T_y \in \crpr{M'}{\alpha}{S}$.
\end{proof}

\begin{proof}[Proof of Corollary \ref{cor}]
By \cite[Theorem 3.2]{Ha} every action $\alpha:G\act M$ is spatial.
Since $M$ is anti-spatially isomorphic to $M'$,
it suffices to prove that $\mathcal{K}\cap M =\{0\}$.
We first prove the case when $M=\mathcal{B}(\ell^2)$.
Since $(\mathcal{B}(\ell^2), L^2(\mathcal{B}(\ell^2)))\cong (\mathcal{B}(\ell^2) \otimes \mathbb{C}1, \ell^2\otimes \overline{\ell^2})$ and $\ell^2$ is infinite dimensional, we have $\mathcal{K}\cap (\mathcal{B}(\ell^2)\otimes \mathbb{C}1)=\{0\}$.
When $M$ is diffuse, it is clear that $\mathcal{K}\cap M=\{0\} $.
The general case follows from \cite[Lemma 2.6]{Ha}, which guarantees that each central projection $q \in M$ enjoys $(Mq, qL^2(M)) \cong (Mq, L^2(Mq))$.
\end{proof}

%
%
\section{Proof of Theorem \ref{thmf}}
Let $(M,\mathcal{H},\alpha)$ be as in Theorem $\ref{thmf}$.
We first point out that the same assertions of the lemmas below hold true for $\rho(1)$ since $\lambda(1)$ and $\rho(1)$ are unitarily equivalent, see Remark \ref{remuni}.
Remark that $\lambda(n)$ converge to 0 weakly,
and hence $\lambda(n)^* K \lambda(n)$ converge to 0 strongly for every compact operator $K$.
Also, note that $(1-P)\lambda(n)$ converges to 0 strongly.
These facts are frequently used throughout.

The following lemma seems well-known, but we do give its proof for the sake of completeness.
\begin{lemma}\label{lemab}
There exists a $*$-isomorphism from $L^\infty=L^\infty(\mathbb{T})$ onto $\{ \lambda(1) \}''$
sending $z$ to $\lambda(1)$.
\end{lemma}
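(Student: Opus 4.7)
The plan is to combine the Fourier--Plancherel identification of the bilateral shift with the tensor-product commutation theorem, so that $\{\lambda(1)\}''$ is identified with the multiplication algebra $L^\infty(\mathbb{T})$ via the Borel functional calculus of the unitary $\lambda(1)$, with $z\mapsto\lambda(1)$.

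First I would treat the one-variable case. Define the Fourier--Plancherel unitary $F:\ell^2(\mathbb{Z})\to L^2(\mathbb{T})$ by $F\delta_n=z^n$, and verify the standard intertwining $F\lambda_1 F^*=M_z$, where $M_z$ denotes multiplication by the coordinate function on $L^2(\mathbb{T},m)$ with $m$ normalized Haar measure; this is immediate from $\lambda_1\delta_n=\delta_{n+1}\mapsto z^{n+1}=z\cdot z^n$. Since the constant function $1$ is cyclic for $M_z$, the multiplication representation of $L^\infty(\mathbb{T})$ on $L^2(\mathbb{T},m)$ is a MASA, so $\{M_z\}''=L^\infty(\mathbb{T})$ as multiplication operators. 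Transporting back through $F$ yields a $*$-isomorphism $L^\infty(\mathbb{T})\to\{\lambda_1\}''$ sending $z$ to $\lambda_1$.

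Next, since $\lambda(1)=1_\mathcal{H}\otimes\lambda_1$, I would invoke Tomita's commutation theorem for tensor products to get $\{\lambda(1)\}'=\mathcal{B}(\mathcal{H})\,\bar{\otimes}\,\{\lambda_1\}'$, and therefore
\[
\{\lambda(1)\}''=\mathbb{C}1_\mathcal{H}\,\bar{\otimes}\,\{\lambda_1\}''.
\]
The map $y\mapsto 1_\mathcal{H}\otimes y$ is a normal $*$-isomorphism from $\{\lambda_1\}''$ onto $\mathbb{C}1_\mathcal{H}\,\bar{\otimes}\,\{\lambda_1\}''$ carrying $\lambda_1$ to $\lambda(1)$, so composing with the isomorphism of the previous paragraph gives the desired $*$-isomorphism $L^\infty(\mathbb{T})\to\{\lambda(1)\}''$ with $z\mapsto\lambda(1)$.

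The argument is essentially bookkeeping with standard tools, so I do not expect any serious obstacle. The one point worth flagging is that the tensor-product commutation theorem is genuinely needed: without it one has to argue separately that amplification by $\mathcal{H}$ preserves the scalar spectral measure class of the unitary (so that the Borel functional calculus remains faithful on $L^\infty(\mathbb{T},m)$), and while this can be done directly by exhibiting cyclic vectors $\xi\otimes f$ with $\xi\in\mathcal H$ and $f\in\ell^2(\mathbb Z)$ of full spectral support, the tensor commutation route is cleaner.
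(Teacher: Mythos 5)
Your argument is correct and follows essentially the same route as the paper: identify $\lambda_1$ with the bilateral shift $M_z$ on $L^2(\mathbb{T})$ via the Fourier unitary $\delta_n\mapsto z^n$, recall that $\{M_z\}''=L^\infty(\mathbb{T})$, and then pass to $\lambda(1)=1_\mathcal H\otimes\lambda_1$. The only difference is one of exposition: you spell out the tensor-product commutation step that the paper leaves implicit when it asserts $f\mapsto 1\otimes V^*fV$ maps onto $\{\lambda(1)\}''$.
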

\begin{proof}
Let $U$ be the bilateral shift on $L^2(\mathbb{T})$ with respect to the standard basis $\{z^n\,|\, n\in \Z \}$
and define the unitary transformation $V:\ell^2(\Z) \to L^2(\mathbb{T})$ by $V\delta_n:=z^n, n\in \Z$.
Then one has $\lambda(1)=1\otimes \lambda_1=1\otimes V^*U V$.
Since the von Neumann algebra generated by $U$ is known to be $L^\infty$,
the correspondence $L^\infty \ni f \mapsto 1\otimes V^* f V \in \{\lambda(1) \}''$ apparently gives the desired $*$-isomorphism.
\end{proof}

Recall that the {\it essential norm} $\|X\|_{\rm e}$ of $X \in \mathcal{B(H)}$ is defined to be $\| X \|_{\rm e} =\inf \{\| X-K \| \,|\, K \in \mathcal{K(H)} \}$
and that $X$ is said to be {\it essentially normal} if $X^*X-XX^*\in \mathcal{K(H)}$.

\begin{lemma}\label{lemca}
The operator $T_{\lambda(1)}$ is essentially normal and the image of $C^*(T_{\lambda(1)})$ in the Calkin algebra
is isomorphic to $C(\mathbb{T})$.
\end{lemma}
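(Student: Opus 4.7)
The plan is to reduce the statement to the classical Toeplitz extension, the key being that $\mathcal{H}$ is finite-dimensional. Since $(G,S)=(\Z,\Zp)$, the projection $P$ factors as $P = 1_\mathcal{H}\otimes Q$, where $Q$ is the orthogonal projection from $\ell^2(\Z)$ onto $\ell^2(\Zp)$. Recalling that $\lambda(1)=1_\mathcal{H}\otimes \lambda_1$, one gets
$$
T_{\lambda(1)} = P(1_\mathcal{H}\otimes \lambda_1)\big|_{\mathbb{H}^2} = 1_\mathcal{H}\otimes S_+,
$$
where $S_+ := Q\lambda_1|_{\ell^2(\Zp)}$ is the unilateral shift on $\ell^2(\Zp)$. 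In particular $T_{\lambda(1)}$ is an isometry and
$$
T_{\lambda(1)}^*T_{\lambda(1)} - T_{\lambda(1)}T_{\lambda(1)}^* = 1 - T_{\lambda(1)}T_{\lambda(1)}^* = 1_\mathcal{H}\otimes (1 - S_+S_+^*)
$$
is the orthogonal projection onto $\mathcal{H}\otimes \mathbb{C}\delta_\iota$, which has rank $\dim\mathcal{H}<\infty$. Hence $T_{\lambda(1)}$ is essentially normal.

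For the Calkin-algebra assertion I would introduce the $*$-homomorphism
$$
\Phi:C^*(S_+)\to \mathcal{B}(\mathbb{H}^2),\qquad \Phi(X)=1_\mathcal{H}\otimes X.
$$
Since $\mathcal{H}\neq \{0\}$, $\Phi$ is an isometric embedding, and its image is a norm-closed $*$-algebra containing $T_{\lambda(1)} = \Phi(S_+)$ and $T_{\lambda(1)}^*$; hence it coincides with $C^*(T_{\lambda(1)})$. Because $\dim\mathcal{H}<\infty$, each $\Phi(X)=1_\mathcal{H}\otimes X$ is unitarily equivalent to a finite direct sum of $\dim\mathcal{H}$ copies of $X$, so $\Phi(X)$ is compact if and only if $X$ is compact; consequently
$$
\Phi^{-1}\bigl(\mathcal{K}(\mathbb{H}^2)\bigr) = C^*(S_+)\cap \mathcal{K}(\ell^2(\Zp)) = \mathcal{K}(\ell^2(\Zp)),
$$
the last equality being standard (irreducibility of $C^*(S_+)$ on $\ell^2(\Zp)$ together with the fact that $1 - S_+S_+^*$ is a rank-one projection in $C^*(S_+)$). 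Combining this with the classical Coburn theorem
$$
0 \longrightarrow \mathcal{K}(\ell^2(\Zp)) \longrightarrow C^*(S_+) \longrightarrow C(\mathbb{T}) \longrightarrow 0
$$
and passing to quotients shows that $\Phi$ induces an isomorphism from $C(\mathbb{T})\cong C^*(S_+)/\mathcal{K}(\ell^2(\Zp))$ onto the image of $C^*(T_{\lambda(1)})$ in $\mathcal{B}(\mathbb{H}^2)/\mathcal{K}(\mathbb{H}^2)$, which is the desired conclusion.

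There is no real obstacle here: the whole argument rests on the single observation that finite-dimensionality of $\mathcal{H}$ makes $\Phi$ both preserve and reflect compactness, allowing the classical Toeplitz extension to transfer to $C^*(T_{\lambda(1)})$ without loss. This reduction genuinely uses $\dim\mathcal{H}<\infty$, since for an infinite-dimensional $\mathcal{H}$ the operator $1_\mathcal{H}\otimes K$ fails to be compact for any non-zero $K$, and a more delicate analysis would be required.
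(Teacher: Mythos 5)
Your proof is correct, but it takes a genuinely different route from the paper's. You exploit the tensor factorization $T_{\lambda(1)} = 1_{\mathcal{H}}\otimes S_+$, use finite-dimensionality of $\mathcal{H}$ to see that the ampliation $X\mapsto 1_{\mathcal{H}}\otimes X$ preserves \emph{and} reflects compactness, and then quote Coburn's extension $0\to\mathcal{K}\to C^*(S_+)\to C(\mathbb{T})\to 0$; this is a clean, local argument that deals only with $C^*(T_{\lambda(1)})$ itself. The paper instead proves the stronger statement that the Toeplitz map is essentially isometric on the whole crossed product, namely $\|T_x\|_{\mathrm e}=\|x\|$ for every $x\in\lcrpr{\Z}{\alpha}{M}$: it uses that $\rho(n)^*PxP\rho(n)\to x$ strongly (citing [Sa, Proposition 3.4]) while $\rho(n)^*K\rho(n)\to 0$ strongly for any compact $K$, and then invokes lower-semicontinuity of the operator norm; combined with essential normality and the $*$-isomorphism $L^\infty\cong\{\lambda(1)\}''$ of Lemma~\ref{lemab}, this yields the Calkin-algebra assertion. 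Your approach is more elementary and self-contained for the specific claim at hand; the paper's buys more — it identifies the essential norm on all of $T(\lcrpr{\Z}{\alpha}{M})$, not merely on $C^*(T_{\lambda(1)})$ — and, interestingly, that part of the argument does not use $\dim\mathcal{H}<\infty$ at all; finite-dimensionality enters only through essential normality, which is needed to conclude commutativity of the image. Both proofs use $\dim\mathcal{H}<\infty$ exactly where you flag it, and your closing remark that the reduction fails for infinite-dimensional $\mathcal{H}$ is accurate: in that case $1-T_{\lambda(1)}T_{\lambda(1)}^*$ is a non-compact projection and $T_{\lambda(1)}$ is not essentially normal, so the statement itself breaks down.
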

\begin{proof}
Since $T_{\lambda(1)}^*T_{\lambda(1)}-T_{\lambda(1)}T_{\lambda(1)}^*$ is the projection onto $\mathcal{H}\otimes \mathbb{C}\delta_\iota$ and $\mathcal{H}$ is finite dimensional,
$T_{\lambda(1)}$ is essentially normal.
For the second assertion it suffices to show that $\|T_x\|_{\rm e}=\| x\|$ for $x\in \lcrpr{\Z}{\alpha}{M}$.
Let $K\in \mathcal{K}(\mathbb{H}^2)$ be arbitrary.
By \cite[Poposition 3.4]{Sa}, $\rho(n)^* PxP \rho(n)$ converges to $x$ strongly.
Since $K$ is compact,
we have $x={\rm s\,\mathchar`-} \lim_{n\to \infty} \rho(n)^*(PxP-K)\rho(n)$.
By the lower-semicontinuity of operator norm, one has $\|T_x-K \| \geq \| x\|$.
Since $K$ is arbitrary, we get $\|T_x\|_{\rm e}=\| x\|$.
\end{proof}

\begin{lemma}\label{lemdi}
If $a\in \pi(M)'$ satisfies that $[T_a, T_{\lambda(1)}] \in \mathcal{K}$,
then every $\sigma$-weak cluster point of $\{\lambda(n)^*PaP \lambda(n) \}_{n\geq 0}$
falls in $\rcrpr{M'}{\alpha}{\Z}$.
\end{lemma}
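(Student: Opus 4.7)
The plan is to fix a $\sigma$-weak cluster point $b$ of $y_n := \lambda(n)^*PaP\lambda(n)$, $n\in\Zp$, say $b=\lim_\nu y_{n_\nu}$ along some subnet (cluster points exist since $\|y_n\|\le\|a\|$), and verify separately the two conditions defining $\rcrpr{M'}{\alpha}{\Z}=\pi(M)'\cap\{\lambda(k):k\in\Z\}'$. The first, $b\in\pi(M)'$, is immediate: $PaP\in\pi(M)'$ since both $P$ and $a$ are, while $\lambda(n)$ normalizes $\pi(M)'$ because conjugation by $\lambda(n)$ sends $\pi(x)$ to $\pi(\alpha_{-n}(x))$. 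Hence every $y_n$ lies in $\pi(M)'$, and so does any $\sigma$-weak limit of such operators.

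The core step is to show $[b,\lambda(1)]=0$. I would start from the algebraic identity
\begin{equation*}
[PaP,\lambda(1)]=Pa[P,\lambda(1)]+P[a,\lambda(1)]P+[P,\lambda(1)]aP.
\end{equation*}
A direct check on basis vectors $\xi\otimes\delta_n$ shows that $[P,\lambda(1)]$ vanishes outside $\mathcal{H}\otimes\delta_{-1}$ and sends $\xi\otimes\delta_{-1}$ to $\xi\otimes\delta_0$; in particular it has rank at most $\dim\mathcal{H}$, and this is the one place where the assumption $\dim\mathcal{H}<\infty$ is crucially used. The middle term $P[a,\lambda(1)]P$ coincides with $[T_a,T_{\lambda(1)}]$ extended by zero off $\mathbb{H}^2$, and is therefore compact by hypothesis. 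Consequently $[PaP,\lambda(1)]\in\mathcal{K}$, and since $G$ is abelian,
\begin{equation*}
[y_n,\lambda(1)]=\lambda(n)^*[PaP,\lambda(1)]\lambda(n),
\end{equation*}
which tends to $0$ strongly by the remark opening this section. Separate $\sigma$-weak continuity of multiplication by $\lambda(1)$ then gives $[b,\lambda(1)]=0$.

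To finish, since $\lambda(1)$ is unitary and hence normal, Fuglede's theorem upgrades $[b,\lambda(1)]=0$ to $[b,\lambda(1)^*]=0$, whence $b\in\{\lambda(1),\lambda(1)^*\}'=\{\lambda(k):k\in\Z\}'$ (using Lemma \ref{lemab} to identify the generated von Neumann algebra). Combined with $b\in\pi(M)'$ this places $b$ in $\rcrpr{M'}{\alpha}{\Z}$. The only real obstacle I anticipate is establishing compactness of $[P,\lambda(1)]$ honestly: everything else is bookkeeping around the hypothesis and the given approximation remark, but this compactness is precisely where finite-dimensionality of $\mathcal{H}$ is indispensable and is the structural reason Theorem \ref{thmf} is restricted to the finite-dimensional setting.
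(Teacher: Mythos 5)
Your proof is correct, and the overall strategy matches the paper's: show $b\in\pi(M)'$ using normalization of $\pi(M)'$ by $\lambda(1)$, and show $[b,\lambda(1)]=0$ by expressing $[y_n,\lambda(1)]$ as a conjugate of a compact by $\lambda(n)$ and passing to the $\sigma$-weak limit. (Minor note: $P[a,\lambda(1)]P$ and $[T_a,T_{\lambda(1)}]$ extended by zero are \emph{not} literally equal --- they differ by $P\lambda(1)(1-P)aP$ --- but that difference is finite rank, so your compactness conclusion still holds.)

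The one place you genuinely diverge from the paper is in how you establish that $[y_n,\lambda(1)]\to 0$ strongly. You first promote the hypothesis $[T_a,T_{\lambda(1)}]\in\mathcal{K}$ to the stronger assertion $[PaP,\lambda(1)]\in\mathcal{K}$, via the three-term expansion
\[
[PaP,\lambda(1)]=Pa[P,\lambda(1)]+P[a,\lambda(1)]P+[P,\lambda(1)]aP,
\]
using that $[P,\lambda(1)]$ has rank $\dim\mathcal{H}<\infty$. The paper does \emph{not} take this route: it works directly with $[PaP,P\lambda(1)P]$ (which is compact purely by hypothesis, no dimension count needed), writes out $\lambda(1)y_n - y_n\lambda(1)$ as $\lambda(n)^*[P\lambda(1)P,PaP]\lambda(n)$ plus an error term of the form $\lambda(n)^*PaP\lambda(1)(1-P)\lambda(n)$, and kills the error term by the observation that $(1-P)\lambda(n)\to 0$ strongly --- no compactness of the error term is required. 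The upshot is that the paper's proof of \emph{this particular lemma} is dimension-free, whereas yours uses $\dim\mathcal{H}<\infty$; your remark that finite-dimensionality is ``crucially used'' here therefore slightly misattributes where the finite-dimensional hypothesis actually bites in Theorem \ref{thmf}. Its real role is in Lemma \ref{lemca}, where essential normality of $T_{\lambda(1)}$ requires the defect projection $T_{\lambda(1)}^*T_{\lambda(1)}-T_{\lambda(1)}T_{\lambda(1)}^*$, which has rank $\dim\mathcal{H}$, to be compact. Your version is perfectly valid in the stated setting, and arguably cleaner once the compactness of $[PaP,\lambda(1)]$ is in hand; the paper's version has the small advantage of isolating the true dependence on $\dim\mathcal{H}$.
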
 
\begin{proof}
Let $b$ be a $\sigma$-weak cluster point of $\{\lambda(n)^*PaP \lambda(n) \}_{n\geq 0}$.
Then there exists a subnet $\Lambda$ of $\mathbb{N}$ such that $b=\sigma$-w-$\lim_{n\in\Lambda} \lambda(n)^* PaP \lambda(n) $.
Since $PaP \in \pi(M)'$ and $\lambda(1)$ normalizes $\pi(M)'$, one has $b\in \pi(M)'$.
Hence, it suffices to show that the $b$ commutes with $\lambda(1)$.
Since $[PaP,P\lambda(1) P] $ is compact by assumption, one has
\begin{align*}
&\phantom{aa,}\lambda(1) ( \lambda(n)^* PaP \lambda(n) )\\
&= \lambda(n)^* P\lambda(1) PaP   \lambda(n)\\
&= \lambda(n)^* PaP \lambda(1) P \lambda(n) +  \lambda(n)^* [ P\lambda(1) P,PaP ] \lambda(n) \\
&= \lambda(n)^* PaP \lambda(n)  \lambda(1) +  \lambda(n)^* [ P\lambda(1) P,PaP ] \lambda(n)
+ \lambda(n)^* PaP \lambda(1) (1 - P) \lambda(n) \\
& \xrightarrow{n \in \Lambda }    b\lambda(1) \quad \text{strongly},
\end{align*}
which implies $[b,\lambda(1)]=0$.
\end{proof}

We denote by $\| X \|_p \in [0,+\infty]$ the Schatten $p$-norm of $X\in \mathcal{B(H)}$ with $1\leq p<\infty$
and define $\|X\|_\infty:=\|X\|$, the operator norm of $X$.
Recall the fact that  the norms $\| \cdot\|_p$ are lower-semicontinuous with respect to the weak operator topology (see e.g. \cite[Proposition 2.11]{Hi}).

\begin{lemma}\label{lemlim}
Let $b \in \{\lambda(1) \}'$, $K\in\mathcal{K}$, and $1\leq p \leq \infty$ be given and set $X:=T_b + K$.
Assume that $x$ is an element in the $*$-algebra generated by $\lambda(1)$ and
there exists a constant $\delta >0$ such that $\|T_xX-T_{bx}\|_p > \delta$.
Then $\|[T_{x\lambda(n)} , X]\|_p > \delta$ and $x\lambda(n) \in \lcrpr{\Zp}{\alpha}{M}$ hold for all sufficiently large $n\in\mathbb{N}$.
\end{lemma}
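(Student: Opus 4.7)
My plan is to bound $\|[T_{x\lambda(n)}, X]\|_p$ from below by $\|T_x X - T_{bx}\|_p$ by compressing the commutator on the left with the contraction $T_{\lambda(n)}^*$ and exploiting the lower semicontinuity of $\|\cdot\|_p$ in the weak operator topology. The second conclusion $x\lambda(n)\in \lcrpr{\Zp}{\alpha}{M}$ is immediate once $n$ exceeds the negative Fourier degree of $x$: writing $x = \sum_{k=-N}^{N}c_{k}\lambda(k)$, the product $x\lambda(n) = \sum_{k}c_{k}\lambda(k+n)$ lies in $\lcrpr{\Zp}{\alpha}{M}$ as soon as $n\ge N$.

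The heart of the argument is the pair of Toeplitz identities
\begin{align*}
T_{\lambda(n)}^{*}T_{x\lambda(n)} = T_{\lambda(-n)x\lambda(n)} = T_{x}, \qquad T_{\lambda(n)}^{*}T_{b}T_{x\lambda(n)} = T_{\lambda(-n)bx\lambda(n)} = T_{bx},
\end{align*}
valid for $n \ge N$ by Remark \ref{reminc} (since $\lambda(n),\,x\lambda(n)\in\lcrpr{\Zp}{\alpha}{M}$). The first uses that $x \in \{\lambda(1)\}''$ commutes with $\lambda(-n)$; the second uses additionally that $b \in \{\lambda(1)\}'$ commutes with every $\lambda(k)$ and with $x$. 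Combining with $X = T_b + K$ and $T_{x\lambda(n)} = T_x T_{\lambda(n)}$ yields
\begin{align*}
T_{\lambda(n)}^{*}[T_{x\lambda(n)}, X] = T_{x}X - T_{bx} - T_{\lambda(n)}^{*}(KT_{x})T_{\lambda(n)}.
\end{align*}

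I would then show that the error term vanishes in operator norm. Since $KT_x$ is compact and the range projection $Q_n := T_{\lambda(n)}T_{\lambda(n)}^*$, onto $\mathcal{H}\otimes \ell^2(\Z_{\ge n})$, tends to $0$ strongly, we have $\|KT_xQ_n\|\to 0$, whence $\|KT_x T_{\lambda(n)}\| = \|KT_x Q_n T_{\lambda(n)}\| \le \|KT_xQ_n\| \to 0$, and composition with the contraction $T_{\lambda(n)}^*$ preserves this decay. Hence $T_{\lambda(n)}^{*}[T_{x\lambda(n)}, X] \to T_x X - T_{bx}$ in operator norm, and therefore in WOT. The lower semicontinuity of $\|\cdot\|_p$ in WOT recalled above, paired with the bound $\|T_{\lambda(n)}^* A\|_p \le \|A\|_p$, then gives
\begin{align*}
\delta < \|T_x X - T_{bx}\|_p \le \liminf_{n} \|T_{\lambda(n)}^* [T_{x\lambda(n)}, X]\|_p \le \liminf_n \|[T_{x\lambda(n)}, X]\|_p,
\end{align*}
so $\|[T_{x\lambda(n)}, X]\|_p > \delta$ for all sufficiently large $n$. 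The main delicacy is the operator-norm vanishing of the error term; this leans crucially on $KT_x$ being compact and on the range of the isometry $T_{\lambda(n)}$ shrinking to $\{0\}$, and would fail for a more general strongly-null bounded sequence on the right of a compact operator.
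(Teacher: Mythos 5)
Your proposal is correct and follows essentially the same route as the paper: compress $[T_{x\lambda(n)},X]$ on the left by $T_{\lambda(n)}^*$, identify the limit $T_xX - T_{bx}$, and invoke lower semicontinuity of $\|\cdot\|_p$ in the weak operator topology. The only (harmless) refinement is that you show the error term $T_{\lambda(n)}^*KT_xT_{\lambda(n)}$ vanishes in operator norm, whereas the paper only records strong convergence to $0$, which is already sufficient for the lower-semicontinuity step.
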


\begin{proof}
Since $x$ is a polynomial of $\lambda(1)$ and $\lambda(1)^*$,
there exists $n_0\in \mathbb{N}$ such that $x\lambda(n_0)$ is in $\lcrpr{\Zp}{\alpha}{M}$.
For $n\geq n_0$ one has
\begin{align*}
T_{\lambda(n)}^* [T_xT_{\lambda(n)},X]  
&=T_{\lambda(n)}^* T_xT_{\lambda(n)}X -T_{\lambda(n)}^*(T_b + K)T_xT_{\lambda(n)}\\
&=T_xX - T_{\lambda(n) b}^* T_{x \lambda(n)} -T_{\lambda(n)}^*KT_xT_{\lambda(n)}\\
&=T_xX- T_{bx} - T_{\lambda(n)}^*KT_xT_{\lambda(n)}
\end{align*}
converges to $T_xX- T_{bx}$ strongly as $n\to \infty$, since $K$ is compact.
Thus, by the lower-semicontinuity of $\|\cdot \|_p$, there exists $n_1>n_0$ such that 
\begin{equation*}
\| [T_{x \lambda(n)},X] \|_p \geq \|T_{\lambda(n)}^* [T_xT_{\lambda(n)},X] \|_p  >  \delta
\end{equation*}
as long as $n\geq n_1$.
\end{proof}

First, we deal with the case of $p=\infty$.
Although Claim \ref{claim} below can be shown in the same way as the proof of \cite[Theorem 2]{Da},
we do give its a bit simplified proof based on the techniques used in \cite{Mu-Xi} and \cite{Xi}.
The way of our proof may be essentially known, but we could not find a suitable reference that explicitly explains such an argument.

\begin{proof}[Proof of Theorem \ref{thmf} when $p=\infty$]
Let $X$ be in $\eci(\Zp\bar{\times}_\alpha M)$.
Since $X$ is in $\eci(T(\pi(M)))=T(\pi(M)')+\mathcal{K}$,
there exists $a\in \pi(M)'$ such that $X-T_a$ is compact.
Let $\Lambda$ be a subsequence of $\mathbb{N}$ such that
the limit $b = \sigma$-w-$\lim_{n\in \Lambda}\lambda(n)^*PaP \lambda(n)$ exists.
Lemma \ref{lemdi} says that this $b$ must be in $\rcrpr{M'}{\alpha}{\Z}$.
If $T_a-T_b$ is compact, then so is $X-T_b$ too.
Since $T_a$ is in $\eci(\Zp\bar{\times}_\alpha M)$, that is, $[X,Y]$ is compact for every $Y\in \Zp\bar{\times}_\alpha M$,
the proof will be complete after establishing the next claim.
\end{proof}

\begin{claim}\label{claim}
If $T_a-T_b$ is not compact,
then there exists $Z$ in $\lcrpr{\Zp}{\alpha}{M}$  such that
$[T_Z,T_a]$ is not compact.
\end{claim}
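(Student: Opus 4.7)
Plan: I suppose $T_a-T_b \notin \mathcal{K}$ and aim to exhibit $Z\in\lcrpr{\Zp}{\alpha}{M}$ with $[T_Z,T_a]\notin\mathcal{K}$, adapting the mechanism of Lemma~\ref{lemlim}. The starting point is the identity
\begin{align*}
T_{\lambda(n)}^*[T_{x\lambda(n)},T_a] = T_xT_a - T_{a_nx}, \qquad a_n:=\lambda(n)^*a\lambda(n),
\end{align*}
valid for every $*$-polynomial $x$ in $\lambda(1)$ and every $n$ large enough so that $x\lambda(n)\in\lcrpr{\Zp}{\alpha}{M}$. This follows from Remark~\ref{reminc}: the first summand uses $T_{\lambda(n)}^*T_{x\lambda(n)}T_a = T_{\lambda(n)^*x\lambda(n)}T_a = T_xT_a$, while the second uses $\lambda(n)^*a x\lambda(n) = \lambda(n)^*a\lambda(n)\cdot\lambda(n)^*x\lambda(n) = a_n x$ (since $x$ commutes with $\lambda(n)$).

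By Lemma~\ref{lemdi} and the construction of $b$, one has $a_n\to b$ $\sigma$-weakly along $\Lambda$, so $T_{a_nx}\to T_{bx}$ $\sigma$-weakly and the right-hand side converges $\sigma$-weakly along $\Lambda$ to $T_xT_a-T_{bx}$. Choosing $x=1$ gives the limit $T_a-T_b$, which is non-compact by hypothesis, so in particular $\|T_a-T_b\|_\infty>0$. Since $T_{\lambda(n)}^*$ is a contraction, $\|[T_{x\lambda(n)},T_a]\|_\infty\geq\|T_xT_a-T_{a_nx}\|_\infty$; by the lower semicontinuity of $\|\cdot\|_\infty$ for the $\sigma$-weak topology (cf.\ the fact recorded before Lemma~\ref{lemlim}), we obtain $\liminf_{n\in\Lambda}\|[T_{\lambda(n)},T_a]\|_\infty\geq \|T_a-T_b\|_\infty>0$.

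This produces only an operator-norm lower bound on the commutators $[T_{\lambda(n)},T_a]$, and unfortunately the essential norm is \emph{not} lower semicontinuous for the $\sigma$-weak topology---each $T_{\lambda(n)}^*[T_{\lambda(n)},T_a] = T_a - T_{a_n}$ is itself compact, while its $\sigma$-weak limit $T_a-T_b$ is not. The main obstacle is thus to upgrade this operator-norm information into an essential-norm (i.e.\ non-compactness) conclusion by choosing $Z$ more cleverly than $x\lambda(n)$: namely, as a genuine $\sigma$-weak limit of finite sums $\sum_k\pi(m_k)\lambda(k)$ with coefficients $(m_k)$ tuned to the non-compactness data of $T_a-T_b$. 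Following the strategy of Davidson~\cite{Da} streamlined by the techniques in~\cite{Mu-Xi,Xi}, I would first extract a weakly null sequence $(\xi_j)$ of unit vectors with $\|(T_a-T_b)\xi_j\|\geq\delta>0$, and then use these vectors to guide the assembly of $Z\in\lcrpr{\Zp}{\alpha}{M}$ so that the action of $[T_Z,T_a]$ on $(\xi_j)$---or on a comparable weakly null family derived from it by shifts---remains bounded below in norm, thereby certifying the required non-compactness of $[T_Z,T_a]$.
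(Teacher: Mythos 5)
You correctly set up the identity $T_{\lambda(n)}^*[T_{x\lambda(n)},T_a]=T_xT_a-T_{a_nx}$, identify that $a_n\to b$ $\sigma$-weakly, and — importantly — you correctly diagnose the central obstacle: each $T_a-T_{a_n}$ is compact while the $\sigma$-weak limit $T_a-T_b$ is not, so lower semicontinuity of the operator norm alone cannot give non-compactness of any single commutator $[T_{\lambda(n)},T_a]$. But at exactly that point your argument stops being a proof and becomes a plan: ``I would first extract a weakly null sequence $(\xi_j)$ \dots\ and then use these vectors to guide the assembly of $Z$'' does not actually construct $Z$, and it is precisely this construction that carries all the difficulty.

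The paper's construction that your sketch would need to supply has several non-optional ingredients, none of which appear in the proposal. First, Lemma~\ref{lemca} identifies the image of $C^*(T_{\lambda(1)})$ in the Calkin algebra with $C(\mathbb{T})$; this is what lets one multiply by functions without losing essential-norm information. Second, following Xia's Proposition~8, one localizes: there is a point $\gamma\in\mathbb{T}$ such that $\|T_{\hat f}(T_a-T_b)\|_{\rm e}>\delta/2$ whenever $f\in C(\mathbb{T})$ satisfies $f(\gamma)=1$. Third, one builds functions $g_k$ supported on shrinking, pairwise disjoint annuli around $\gamma$, approximates by trigonometric polynomials $h_k$, and invokes Lemma~\ref{lemlim} to produce $p_k=\hat{h_k}\lambda(m_k)\in\lcrpr{\Zp}{\alpha}{M}$ with $\|[T_{p_k},T_a]\|>\delta/2$. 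The disjointness of supports is what guarantees that $\sum_k p_k$ converges strongly, so that the limit $Z$ exists and lies in $\lcrpr{\Zp}{\alpha}{M}$. Finally, the argument that $[T_Z,T_a]$ is not compact is itself non-trivial: it requires the extraction lemma \cite[Lemma~2.1]{Mu-Xi} applied to the compact operators $[T_{p_k},T_a]$, using that $p_k,p_k^*\to 0$ strongly. Note also that the paper's $Z$ is built from elements $\hat{h_k}\lambda(m_k)$ of $\{\lambda(1)\}''\cdot\{\lambda(m)\}$, not from $\pi(m_k)\lambda(k)$ as your sketch suggests; the coefficients must live in the commutative algebra generated by $\lambda(1)$, since it is the Calkin-algebra $C(\mathbb{T})$ structure of that algebra that drives the localization. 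So while your diagnosis of what must be overcome is accurate and your initial reductions are sound, the proposal has a genuine gap where the substantive content of the claim lies.
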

\begin{proof}[{\rm (}$\because${\rm )}]
Set $\delta:=\| T_a-T_b \|_{\rm e}>0$.
By Lemma \ref{lemca} the image of $C^*(T_{\lambda(1)})$ in the Calkin algebra is isomorphic to $C(\mathbb{T})$.
As in the proof of \cite[Proposition 8]{Xi} we can choose $\gamma \in \mathbb{T}$ in such a way that $f(\gamma)=1$ forces
$$
\|T_{\hat{f}}T_a-T_{\hat{f}b} \|_{\rm e} = \|T_{\hat{f}}(T_a-T_b) \|_{\rm e} > \delta/2
$$
for $f\in C(\mathbb{T})$,
where $L^\infty \ni g \mapsto \hat{g} \in \{\lambda(1)\}''$ is the $*$-isomorphism in Lemma \ref{lemab}.
Let us construct a sequence $\{p_k \}_k \subset \lcrpr{\Zp}{\alpha}{M}$ such that
$\lim_{k\to\infty}\|[T_{p_k},T_a] \| \geq \delta/2$ and the sum $\sum_{k=1}^\infty p_k$ converges strongly.
Set $B_n^\gamma:=\{ \gamma' \in \mathbb{T} \,|\, |\gamma'-\gamma| < n^{-1}\}$ and
$c(f):=\|T_{\hat{f}}T_a-T_{\hat{f}b} \|$ for $f\in L^\infty(\mathbb{T})$.
Note that $c(\cdot)$ is $\sigma$-weakly lower-semicontinuous.
For $n+2 < k$ one can take $f_{n,k} \in C(\mathbb{T})$ in such a way that
$0\leq f_{n,k}\leq 1$, $f=0$ on $B^\gamma_k \cup (\mathbb{T}\setminus B^\gamma_{n})$ and $f=1$ on $B^\gamma_{n+1}\setminus B^\gamma_{k-1}$.
Note that $\{ f_{n,k} \}_k$ converges almost everywhere to a function $f_n\in C(\mathbb{T})$ with $f_n(\gamma)=1$.
Hence, by the lower-semicontinuity of $c(\cdot)$ together with Lebesgue's dominated convergence theorem,
there exists $n'>n$ such that $c(f_{n,n'})>\delta/2$.
We can inductively choose $n_1<n_1'<n_2<n_2'<\dots$ in such a way that
$g_k:=f_{n_k,n_k'}$ satisfies that $0\leq g_k \leq 1$, $\{ g_k\}_k$ have pairwise disjoint supports,
and $c(g_k)>\delta/2$.
Let $h_k$ be a trigonometric polynomial such that $\|g_k-h_k \|_\infty <2^{-k}$ and $c(h_k)>\delta/2$.
By Lemma \ref{lemlim} one can choose $m_k$ such that $\| [T_{\hat{h_k}\lambda({m_k})},T_a] \| >\delta/2$
and $\hat{h_k}\lambda({m_k})\in \lcrpr{\Zp}{\alpha}{M}$.
Set $p_k:=\hat{h_k}\lambda(m_k)$,
then $\|[T_{p_k},T_a] \| \leq 2 \| h_k\|_\infty \|a\| \leq 4\| a\|$.
Thus by passing to a subsequence if necessary,
we may and do assume that $\lim_{k\to \infty} \| [T_{p_k}, T_a]\|$ exists.
Let $\chi_k$ be the characteristic function of ${\rm supp}(g_k)$.
Then $\hat{\chi_k}, k\in \mathbb{N}$, are mutually orthogonal projections in $\{\lambda(1)\}''$ satisfying that
$\|p_k(1-\hat{\chi_k}) \| = \|h_k (1-\chi_k) \|_\infty <2^{-k}$ and
$\|p_k \hat{\chi_k}\| = \|\hat{\chi_k}p_k\hat{\chi_k}\| \leq 2$ for every $k\in \mathbb{N}$.
For any $\xi \in \mathbb{L}^2$ and $k<l$ we have
\begin{align*}
\left\| \sum_{i=k}^l p_i \xi \right\|
\leq \left\| \sum_{i=k}^l \hat{\chi_i} p_i \hat{\chi_i} \xi \right\| + \left\|\sum_{i=k}^l p_i  (1- \hat{\chi_i}) \xi \right\|
\leq 2 \left\| \sum_{i=k}^l \hat{\chi_i} \xi \right\| + 2^{-(k-1)}\|\xi \|
\end{align*}
converges to 0 as $k,l \to \infty$,
and hence the $\{p_k \}_k$ is the desired sequence.
Note that $p_k$ and $p_k^*$ converge to 0 strongly.
Since $T_a \in \eci(\Zp\bar{\times}_\alpha M)$,
we can apply \cite[Lemma 2.1]{Mu-Xi} to compact operators $[T_{p_k},T_a]$,
and obtain a subsequence $\{p_{k(i)}\}_{i=1}^\infty$ such that
$$
\sum_{i=1}^\infty [T_{p_{k(i)}},T_a] \;\;{\rm converges \;\; strongly\;\; and \;\;} \left\| \sum_{i=1}^\infty [T_{p_{k(i)}},T_a] \right\|_{\rm e} \geq \delta/2.
$$
Letting $Z:=\sum_{i=1}^\infty p_{k(i)}$ we have $Z\in \lcrpr{\Zp}{\alpha}{M}$ and $[T_Z,T_a] \notin \mathcal{K}$,
which implies the claim.
\end{proof}

%
%
We then treat the case of $p\neq \infty$.
The next lemma originates in \cite[Lemma 2.1]{Mu-Xi}.
\begin{lemma}\label{lemcpt}
Let $\mathcal{H}_1$ be a Hilbert spaces and fix $1\leq p< \infty$.
Assume that a sequence $\{ K_n \}_n \subset \mathfrak{S}_p(\mathcal{H}_1)$
satisfies the following conditions{\rm :}
\begin{itemize}
\item[(A1)]$\|K_n\|_p >2$ for every $n\in \mathbb{N}$.
\item[(A2)] $\sup_n \| K_n\| <C_1$ for some $C_1>0$.
\item[(A3)] $K_n$ and $K_n^*$ converge to $0$ strongly. 
\end{itemize}
Then there exists a subsequence $\{K_{n_k} \}_{k=1}^\infty$ such that $\sum_{k=1}^\infty K_{n_k}$ converges strongly and $\sum_{k=1}^\infty K_{n_k} \notin \mathfrak{S}_p(\mathcal{H}_1)$.
\end{lemma}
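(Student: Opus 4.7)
The plan is to extract a subsequence whose ``main parts'' act on pairwise orthogonal row and column subspaces, so that the Schatten $p$-norms add up while the sum still converges strongly. I would construct inductively $n_1<n_2<\cdots$ together with finite-rank projections $P_k,Q_k$ satisfying $\|Q_kK_{n_k}P_k\|_p>1$, $\|K_{n_k}-Q_kK_{n_k}P_k\|_p<2^{-k}$, and $P_kP_l=Q_kQ_l=P_kQ_l=0$ for every $k\ne l$.

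For the inductive step, once $n_1<\cdots<n_{k-1}$ and the earlier projections are chosen, let $E_{k-1}$ denote the orthogonal projection onto the finite-dimensional subspace spanned by the ranges of $P_j$ and $Q_j$ with $j<k$, and write $r_{k-1}$ for its rank. Since (A3) gives $\|E_{k-1}K_n\|\to 0$ and $\|K_nE_{k-1}\|\to 0$, and these operators have rank at most $r_{k-1}$, the bound $\|L\|_p\le r_{k-1}^{1/p}\|L\|$ valid for rank-$r_{k-1}$ operators $L$ lets me pick $n_k$ so large that the double compression $K'_{n_k}:=(1-E_{k-1})K_{n_k}(1-E_{k-1})$ is within $2^{-(k+1)}$ of $K_{n_k}$ in $\|\cdot\|_p$ and still satisfies $\|K'_{n_k}\|_p>2-2^{-(k+1)}$. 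Truncating $K'_{n_k}$ via its singular value decomposition then produces finite-rank projections $P_k,Q_k$ whose ranges lie inside $(1-E_{k-1})\mathcal{H}_1$ with $\|K'_{n_k}-Q_kK_{n_k}P_k\|_p<2^{-(k+1)}$ and $\|Q_kK_{n_k}P_k\|_p>1$; since $P_k$ and $Q_k$ are orthogonal to $E_{k-1}$, they are automatically orthogonal to every earlier $P_j$ and $Q_j$, closing the induction.

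With the projections in hand, set $Y:=\sum_kQ_kK_{n_k}P_k$ and $R:=\sum_k(K_{n_k}-Q_kK_{n_k}P_k)$. The series for $R$ converges absolutely in $\mathfrak{S}_p$, so $R\in\mathfrak{S}_p$. The series for $Y$ converges strongly, since by the orthogonality of $\{Q_k\}$ one has $\bigl\|\sum_{k=M}^N Q_kK_{n_k}P_kv\bigr\|^2=\sum_{k=M}^N\|Q_kK_{n_k}P_kv\|^2\le C_1^2\sum_{k=M}^N\|P_kv\|^2$, which tends to zero as $M\to\infty$ by Bessel's inequality. Finally, because $Q_kQ_l=P_kP_l=0$ for $k\ne l$, the positive operator $Y^*Y$ decomposes as the orthogonal direct sum $\bigoplus_kP_kK_{n_k}^*Q_kK_{n_k}P_k$, whence ${\rm Tr}((Y^*Y)^{p/2})=\sum_k\|Q_kK_{n_k}P_k\|_p^p=\infty$. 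Thus $Y\notin\mathfrak{S}_p$, and therefore $\sum_kK_{n_k}=Y+R$ also lies outside $\mathfrak{S}_p$.

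The main obstacle is precisely the inductive construction, because the projections $P_k,Q_k$ must be \emph{exactly} orthogonal to all earlier ones---the additivity of $\|\cdot\|_p^p$ used at the very end is fragile under any perturbation of supports. Passing first to the double compression $(1-E_{k-1})K_{n_k}(1-E_{k-1})$ and truncating \emph{that} is what produces projections with such clean orthogonality, and the reason one pays only an $\mathfrak{S}_p$-error of size $2^{-(k+1)}$ for this replacement is the finite rank of $E_{k-1}$ together with the bound $\|L\|_p\le r^{1/p}\|L\|$ for rank-$r$ operators; this is also the point where the argument genuinely differs from the $p=\infty$ case of \cite{Mu-Xi}.
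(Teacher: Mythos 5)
Your proof is correct, and while the inductive engine is similar in spirit to the paper's, the way you organize the endgame is genuinely different. The paper uses a single family of mutually orthogonal finite-rank projections $Q_k$, chosen so that $\|Q_kK_{n_k}Q_k\|_p>1$, the off-diagonal errors $\|Q_k^\perp K_{n_k}\|_p$, $\|K_{n_k}Q_k^\perp\|_p$ are summable, and (as a separate third condition) $\|K_{n_k}R_k\|<2^{-k}$ against a fixed exhaustion $\{R_k\}$ of a separable carrier space; strong convergence of $\sum K_{n_k}$ is then read off from that third condition, and non-membership in $\mathfrak{S}_p$ from $\|Q_nXQ_n\|_p\geq 1/2$ together with an (implicit) pinching inequality for the family $\{Q_n\}$. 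You instead extract from the SVD of the double compression $(1-E_{k-1})K_{n_k}(1-E_{k-1})$ a two-sided piece $Q_kK_{n_k}P_k$ that approximates $K_{n_k}$ to within $2^{-k}$ in $\|\cdot\|_p$, which splits $\sum K_{n_k}=Y+R$ with $R\in\mathfrak{S}_p$ and $Y$ an exact orthogonal direct sum. That reorganization buys you two simplifications: strong convergence of $Y$ comes directly from Bessel's inequality applied to the orthogonal $P_k$'s, so no auxiliary exhaustion is needed; and the non-membership $Y\notin\mathfrak{S}_p$ is the exact identity $\operatorname{Tr}\bigl((Y^*Y)^{p/2}\bigr)=\sum_k\|Q_kK_{n_k}P_k\|_p^p=\infty$, avoiding the pinching estimate the paper relies on without citation. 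The core mechanism --- pass to a finite-corank compression to force orthogonality with the earlier projections, using (A3) plus the rank bound $\|L\|_p\leq r^{1/p}\|L\|$ to control the $\mathfrak{S}_p$-error, then truncate --- matches the paper's choice of $Q_{k+1}$ beneath $Q^\perp$, so the overall strategy is the same, but your two-sided SVD truncation and the explicit $Y+R$ decomposition make the final step self-contained.
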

\begin{proof}
Let $\mathcal{H}_0$ be the separable Hilbert space generated by $\bigcup_{n=1}^\infty({\rm Ker}K_n)^\bot$.
Choose a CONS $\{e_n\}_{n=1}^\infty$ of $\mathcal{H}_0$, 
and let $R_n$ be the orthogonal projection onto the linear span of $e_1,\dots, e_n$.
We claim that there exist mutually orthogonal finite rank projections $\{Q_k\}_{k=1}^\infty$
and a subsequence $\{K_{n_k}\}_{k=1}^\infty$ such that
\begin{itemize}
\item[(1)]$\|Q_k K_{n_k} Q_k\|_p >1$,
\item[(2)]$\| K_{n_k} Q_k^\bot\|_p <3^{-k}$ and $\| Q_k^\bot K_{n_k} \|_p < 3^{-k}$,
\item[(3)]$\|K_{n_k} R_k\| < 2^{-k}$,
\end{itemize}
with $Q_k^\bot:=I-Q_k$.
Assume that we have chosen $Q_1,\dots,Q_k$ and $n_1,\dots,n_k$.
Put $Q:=\sum_{j=1}^k Q_j$. Since $K_n,K_n^* \to 0$ strongly and $Q$ is finite rank,
there exists $n_{k+1}>n_k$ such that
$\|K_{n_{k+1}} R_{k+1}\| < 2^{-k-1}$,
$\|K_{n_{k+1}} Q\|_p < 3^{-k-1}$,
and $\|Q K_{n_{k+1}}\|_p < 3^{-k-1}$.
Thus this $K_{n_{k+1}}$ satisfies the desired (3). Write $K:=K_{n_{k+1}}$ for short. 
We have
\begin{align*}
2
<\| K \|_p 
\leq \|Q^\bot K Q^\bot \|_p  + \|Q KQ^\bot\|_p +\|KQ\|_p
< \| Q^\bot KQ^\bot \|_p + 1,
\end{align*}
implying $\|Q^\bot KQ^\bot \|_p > 1$. 
Let $F_j \leq Q^\bot, j\in J,$ be an increasing net of finite rank projections which converges to $Q^\bot$ strongly.
Since $K$ is in $\mathfrak{S}_p$,
one has $\|F_i K F_i -Q^\bot KQ^\bot \|_p$, $\| K F_i^\bot \|_p$, and $\|F_i^\bot K \|_p$ converge to 0.
By the lower-semicontinuity of norm, we can find $j\in J$ in such a way that $Q_{k+1}:=F_j<Q^\bot $ satisfies (1) and (2) for $K=K_{n_{k+1}}$. 
Hence we can construct the desired $Q_k$ and $K_{n_k}$ by induction. 
 
Write $K_k:=K_{n_k}$ for short.
By (A2) and (2) we have
\begin{align*}
\left\| \sum_{k=1}^n K_k \right\|
\leq \left\| \sum_{k=1}^n Q_k K_k Q_k \right\|
+ \sum_{k=1}^n  \left\|  Q_k^\bot K_k Q_k \right\| + \sum_{k=1}^n \left\|  K_k Q_k^\bot \right\| 
 <  C_1 + 2.
\end{align*}
Hence $\sum_{k=1}^n K_k $ is norm bounded.
If $\xi$ in $R_n\mathcal{H}_0$ and $m\geq l \geq n$, then by (3) we have
\begin{align*}
\left\| \left(\sum_{k=1}^m K_k - \sum_{k=1}^l K_k \right)\xi \right\|
\leq \sum_{k=l+1}^m \| K_k R_n\|\,\| \xi \| 
\leq \sum_{k=l+1}^m 2^{-k} \| \xi \|  \leq 2^{-l}\| \xi\|.
\end{align*}
Since $\bigcup_{n=1}^\infty R_n\mathcal{H}_1$ is dense in $\mathcal{H}_0$ and
each $K_k$ equals to 0 on $\mathcal{H}_0^\bot$, 
$\sum_{k=1}^\infty K_k$ converges strongly. Set $X=\sum_{k=1}^\infty K_k$.
Since each $Q_n$ is finite rank, $Q_n \sum_{k=1}^m K_k Q_n$ converges to $Q_n X Q_n$ in norm. 
For each $n\in\mathbb{N}$ we have,
\begin{align*}
\|Q_n X Q_n\|_p&
= \lim_{m\to\infty}\left\| Q_n \sum_{k=1}^m K_k Q_n \right\|_p 
\geq \|Q_n K_n Q_n\|_p - \sum_{k\neq n}\|Q_n K_k Q_n\|_p \\ 
&\geq 1- \sum_{k\neq n} \|K_k(I-Q_k) \|_p
\geq 1- \sum_{k\neq n} 3^{-k}
\geq \frac{1}{2}, 
\end{align*}
which implies that $X \notin \mathfrak{S}_p$, since $Q_n, n\in \mathbb{N},$ are mutually orthogonal.
\end{proof}

%
%
We can now complete the proof of Theorem \ref{thmf}.

\begin{proof}[Proof of Theorem \ref{thmf} when $p \neq \infty$]
Let $X \in \ecp(\Zp\bar{\times}_\alpha M)$ be arbitrarily chosen.
Since $\ecp(\Zp\bar{\times}_\alpha M) \subset \eci(\Zp\bar{\times}_\alpha M)$,
there exists $b \in \rcrpr{M'}{\alpha}{\Z}$ such that
$X-T_b$ is compact.
Suppose that $X-T_b \in \mathcal{K}\setminus \mathfrak{S}_p$.
Define $c_p:L^\infty \to [0,+\infty]$ by 
$c_p(g)=\|T(\hat{g})X-T(b\hat{g}) \|_p$.
Note that $c_p$ is lower-semicontinuous with respect to the weak operator topology and $c_p(1)=+\infty$.
Suppose that for each $\gamma \in \mathbb{T}$ there exists $n\in \mathbb{N}$ such that
$c_p(f)\leq 2$ as long as $f\in C(\mathbb{T})$ satisfies that $0\leq f \leq 1$ and ${\rm supp}(f) \subset B^\gamma_n$.
Then, by the compactness of $\mathbb{T}$, there exist such $(\gamma_1,n_1),\dots,(\gamma_k,n_k) \in \mathbb{T}\times \mathbb{N}$ with $\mathbb{T}=\bigcup_{i=1}^k B^{\gamma_i}_{n_i}$.
Taking a partition of the unity $\{ \psi_i\}_{i=1}^k$ for the covering $\{ B^{\gamma_i}_{n_i} \}_{i=1}^k$,
we have $c_p(1)  \leq \sum_{i=1}^k c_p(\psi_i) \leq 2k$,
a contradiction.
Hence, we can find $\gamma \in \mathbb{T}$ such that
for every $n\in \mathbb{N}$ there exists $f_n \in C(\mathbb{T})$ such that  $c_p(f_n)>2$, $0\leq f_n \leq 1$, and ${\rm supp}(f_n) \subset B^{\gamma}_n$.
By a same approximation argument as in the case of $p=\infty$ together with Lemma \ref{lemlim},
we obtain $p_k\in \lcrpr{\Zp}{\alpha}{M}$ such that
$p_k$ and $p_k^*$ converge to 0 strongly,
$\sum_{k=1}^\infty p_k$ converges strongly,
and that $\| p_k\| \leq 2$ and $\|[T_{p_k},X]\|_p>2$ for every $k\in \mathbb{N}$.
Applying Lemma \ref{lemcpt} to $K_k=[T_{p_k},X]$ we obtain a subsequence $\{p_{k(i)}\}_i$ such that
$Z:=\sum_{i=1}^\infty p_{k(i)} \in \lcrpr{\Zp}{\alpha}{M}$ and $[T_Z, X] \notin \mathfrak{S}_p$.
This is a contradiction; hence $X-T_b$ is in $\mathfrak{S}_p$.
\end{proof}

\begin{remark}\label{remxia}\normalfont
To compute the essential fixed-points, there is a technical obstruction:
In the proof of Lemma \ref{lemdi}, it is crucial that that $\lambda(1)$ normalizes $\pi(M)'$.
In \cite{Xi} Xia computed $\efi(T(H^\infty))$ by using the finite Blaschke product $w_n$ instead of the inner function $z^n$, see \cite[Proposition 3]{Xi}.
However, unitary elements in $C^*(\lambda(1))$ do not normalize $\pi(M)'$ in general.
\end{remark}

%
%
\section{The Condition $(\star)$}
Let $M$ be a von Neumann algebra on $L^2(M)$ and $\alpha$ be a $*$-automorphism of $M$.
We say $(M,\alpha)$ satisfies the condition $(\star)$ if
\begin{equation*}
\ecp(\crpr{\Zp}{\alpha}{M}) \subset T(\rcrpr{M'}{\alpha}{\Z}) + \mathfrak{S}_p, \quad 1\leq p\leq \infty \tag{$\star$}
\end{equation*}
hold true.
Note that $(M,\alpha)$ satisfies $(\star)$ if and only if so does $(M',\alpha)$ if and only if
\begin{equation*}
\ecp(\crpr{M}{\alpha}{\Zp}) \subset T(\lcrpr{\Z}{\alpha}{M'})+ \mathfrak{S}_p, \quad 1\leq p\leq \infty
\end{equation*}
hold.
We have already seen that $(M,\alpha)$ satisfies $(\star)$ when
$M$ is either diffuse, type I$_\infty$ factor, a direct sum of them (Corollary \ref{cor}), or finite dimensional (Theorem \ref{thmf}).
In fact, we can prove the next theorem which asserts the condition $(\star)$ is satisfied in a more general case.
For a given $(M,\alpha)$ it is known and not hard to see that $M$ is decomposed uniquely as $M=M_{\rm c} \oplus M_\infty \oplus \sum_{n \geq 1}^\oplus M_n$, where $M_{\rm c}$ is diffuse, the $M_\infty$ a direct sum of infinite type I factors,
and $M_n=M_n(\mathbb{C})\otimes \ell^\infty(\mathfrak{X}_n)$, $n\geq 1$, with discrete sets $\mathfrak{X}_n$.
The uniqueness of this decomposition guarantees that $\alpha$ is also decomposed as $\alpha=\alpha_{\rm c}\oplus \alpha_\infty \oplus \sum_{n\geq 1}^\oplus \alpha_n$.
It is not difficult to see that there exists a unique automorphism $\beta_n$ of $\ell^\infty(\mathfrak{X}_n)$ and a unitary element $v_n \in M_n(\mathbb{C})\otimes \ell^\infty(\mathfrak{X}_n)$ such that $\alpha_n={\rm Ad}v_n\circ ({\rm id}\otimes \beta_n)$.
Remark that $\beta_n$ induces a unique bijection $\theta_n$ on $\mathfrak{X}_n$.
With these notation we have:

\begin{theorem}\label{thm3}
Assume that every orbit of $\theta_n$ forms a finite set.
Then $(M,\alpha)$ satisfies the condition $(\star)$.
\end{theorem}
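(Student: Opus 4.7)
The plan is to reduce Theorem~\ref{thm3} to Corollary~\ref{cor} and Theorem~\ref{thmf} via an orbit refinement of the decomposition $M = M_c \oplus M_\infty \oplus \bigoplus_n M_n$, followed by a block-wise assembly argument using Claim~\ref{claim} and Lemma~\ref{lemcpt}.

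First I would refine the decomposition. Within each $M_n = M_n(\mathbb{C}) \otimes \ell^\infty(\mathfrak{X}_n)$, partition $\mathfrak{X}_n$ into orbits of $\theta_n$. By hypothesis every orbit $O$ is finite, so each $N_{n,O} := M_n(\mathbb{C}) \otimes \ell^\infty(O)$ is a finite-dimensional, $\alpha_n$-invariant summand. Setting $M_0 := M_c \oplus M_\infty$, this yields the $\alpha$-invariant decomposition
\[
M \;=\; M_0 \;\oplus\; \bigoplus_{n,O} N_{n,O}.
\]
The proof of Corollary~\ref{cor} gives $M_0' \cap \mathcal{K} = \{0\}$, so $(M_0, \alpha|_{M_0})$ satisfies $(\star)$ by Theorem~\ref{thmd}; each finite-dimensional $(N_{n,O}, \alpha_n|_{N_{n,O}})$ satisfies $(\star)$ by Theorem~\ref{thmf}.

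Denote by $z_0$ and $z_{n,O}$ the $\alpha$-invariant central projections corresponding to these summands. Given $X \in \ecp(\crpr{\Zp}{\alpha}{M})$, write $X = T_a + K$ with $a \in \pi(M)'$ and $K \in \mathfrak{S}_p$ (Remark~\ref{reminc}). Since $T_{\lambda(1)} \in \crpr{\Zp}{\alpha}{M}$, the commutator $[T_a, T_{\lambda(1)}] = [X, T_{\lambda(1)}] - [K, T_{\lambda(1)}]$ lies in $\mathfrak{S}_p \subset \mathcal{K}$, so Lemma~\ref{lemdi} produces a $\sigma$-weak cluster point $b$ of $\{\lambda(n)^* PaP \lambda(n)\}_{n\ge 0}$ inside $\rcrpr{M'}{\alpha}{\Z}$ with $\|b\| \le \|a\|$. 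Since $z_0$ and each $z_{n,O}$ commute with $\pi(M)$, $P$, and every $\lambda(g)$, fixing a single defining subnet decomposes
\[
a = a_0 \oplus \bigoplus_{n,O} a_{n,O}, \qquad b = b_0 \oplus \bigoplus_{n,O} b_{n,O}
\]
compatibly. On the $M_0$-block, the argument of Theorem~\ref{thmd} (using $\pi(M_0)' \cap \mathcal{K} = \{0\}$ via Lemma~\ref{lemnocpt}) forces $T_{a_0} \in T(\rcrpr{M_0'}{\alpha|_{M_0}}{\Z})$, and $\sigma$-weak continuity of $T$ then gives $T_{a_0} = T_{b_0}$. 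On each finite-dimensional block, Theorem~\ref{thmf} applied to $z_{n,O} X z_{n,O}$ yields $T_{a_{n,O}} - T_{b_{n,O}} \in \mathfrak{S}_p$.

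The hard part is to upgrade these blockwise conclusions to the global statement
\[
T_a - T_b \;=\; \bigoplus_{n,O}\bigl(T_{a_{n,O}} - T_{b_{n,O}}\bigr) \;\in\; \mathfrak{S}_p(\mathbb{H}^2),
\]
since membership in $\mathfrak{S}_p$ on each finite-dimensional block does not by itself imply that the block-diagonal sum is in $\mathfrak{S}_p$. I would argue by contradiction, mirroring Claim~\ref{claim}. If the sum is not in $\mathfrak{S}_p$, then there exist $\delta > 0$ and infinitely many pairs $(n,O)$ with $\|T_{a_{n,O}} - T_{b_{n,O}}\|_p > \delta$ (otherwise the sum would reduce to a finite sum of $\mathfrak{S}_p$-operators). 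For each such block, Claim~\ref{claim} when $p = \infty$ (respectively the construction used in the proof of Theorem~\ref{thmf} when $p < \infty$) supplies $Z_{n,O} \in \lcrpr{\Zp}{\alpha_n}{N_{n,O}}$, uniformly bounded in operator norm, satisfying $\|[T_{Z_{n,O}}, z_{n,O} X z_{n,O}]\|_p > \delta/4$. Extending each $Z_{n,O}$ by zero on the complementary summands and invoking Lemma~\ref{lemcpt} (the diagonal blocks $z_{n,O}[T_{Z_{n,O}}, X] z_{n,O} = [T_{Z_{n,O}}, z_{n,O} X z_{n,O}]$ are supported on mutually orthogonal subspaces and the off-diagonal contributions decay strongly to zero), one extracts a subfamily whose sum $Z := \sum_i Z_{n_i, O_i}$ lies in $\lcrpr{\Zp}{\alpha}{M}$ and satisfies $[T_Z, X] \notin \mathfrak{S}_p$---contradicting $X \in \ecp(\crpr{\Zp}{\alpha}{M})$. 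Therefore $T_a - T_b \in \mathfrak{S}_p$, and $X = T_b + (X - T_a) + (T_a - T_b) \in T(\rcrpr{M'}{\alpha}{\Z}) + \mathfrak{S}_p$, establishing $(\star)$.
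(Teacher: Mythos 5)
Your reduction to the blocks $M_0 = M_{\rm c}\oplus M_\infty$ and $N_{n,O}$ matches the paper's orbit decomposition, and the conclusion that each block individually satisfies $(\star)$ (by Theorem~\ref{thmd}, resp.\ Theorem~\ref{thmf}) is correct. But you do not invoke the paper's Lemma~\ref{lemsum} — the direct-sum permanence of $(\star)$ — which is precisely the lemma the paper introduces to handle the gluing step, and your replacement for it contains a gap.

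The problem is in your step ``if the sum is not in $\mathfrak{S}_p$, then there exist $\delta>0$ and infinitely many pairs $(n,O)$ with $\|T_{a_{n,O}}-T_{b_{n,O}}\|_p > \delta$.'' For $p=\infty$ this is fine: a block-diagonal sum of compacts is compact iff the block norms tend to $0$, so failure does give infinitely many blocks bounded below. But for $1\le p<\infty$ it is false: one can have $\|D_i\|_p\to 0$ yet $\sum_i\|D_i\|_p^p=\infty$, so $\bigoplus_i D_i\notin\mathfrak{S}_p$ while for every $\delta>0$ only finitely many blocks exceed $\delta$. Your parenthetical ``otherwise the sum would reduce to a finite sum of $\mathfrak{S}_p$-operators'' is not valid. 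Consequently the subsequent appeal to Claim~\ref{claim} and Lemma~\ref{lemcpt} has nothing to latch onto in the typical divergent case, and the contradiction argument does not close.

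The paper's Lemma~\ref{lemsum} sidesteps exactly this difficulty. Instead of isolating a uniformly-large subfamily of blocks, it produces, for \emph{every} block $i$, an integer $n_i$ with
$\bigl\|[T_{\lambda(n_i)e_i},T_{ae_i}]\bigr\|_p > \tfrac12\|K_i\|_p$
(via $T_{\lambda(n)e_i}^*T_{ae_i}T_{\lambda(n)e_i}-T_{ae_i}\to K_i$ strongly and lower semicontinuity of $\|\cdot\|_p$), and then sums the single operator $x:=\sum_i\lambda(n_i)e_i\in\lcrpr{\Zp}{\alpha}{M}$. Because the $e_i$ are mutually orthogonal central projections, $[T_x,T_a]$ is block diagonal with those blocks, so $\|[T_x,T_a]\|_p^p\ge 2^{-p}\sum_i\|K_i\|_p^p$; since $T_a\in\ecp$ forces $[T_x,T_a]\in\mathfrak{S}_p$, one gets $\sum_i\|K_i\|_p^p<\infty$ directly, with no contradiction argument, no Claim~\ref{claim}, and no uniform lower bound on the block errors. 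Your argument is salvageable only by rebuilding essentially this scaling device, at which point you have reproved Lemma~\ref{lemsum}; as written, the $p<\infty$ case does not go through.
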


To prove this theorem we need the following lemma.
\begin{lemma}\label{lemsum}
Let $\{(M_i,\alpha_i)\}_{ i\in I}$ be a family of von Neumann algebras and their $*$-automorphisms.
Set $M:=\sum_{i\in I}^\oplus M_i$ and $\alpha:=\sum_{i\in I}^\oplus \alpha_i$.
If every $(M_i, \alpha_i)$ satisfies the condition $(\star)$, then so does $(M,\alpha)$.
\end{lemma}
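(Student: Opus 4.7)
The plan is to reduce to blockwise applications of $(\star)$ via the central projections coming from the direct-sum decomposition. Let $z_i \in Z(M)$ be the central projection onto $M_i$; since $\alpha$ fixes each $z_i$, the projection $P_i := z_i \otimes 1_{\ell^2(\Zp)} = T_{\pi(z_i)}$ is central in $\crpr{\Zp}{\alpha}{M}$ (it sits in $T(\pi(Z(M)))$ and commutes with every $T_{\lambda(n)}$) as well as in $T(\rcrpr{M'}{\alpha}{\Z})$, and these projections decompose $\mathbb{H}^2 = \bigoplus_i \mathbb{H}^2_i$ with $\mathbb{H}^2_i := L^2(M_i) \otimes \ell^2(\Zp)$. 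Correspondingly, both $\crpr{\Zp}{\alpha}{M}$ and $\rcrpr{M'}{\alpha}{\Z}$ split as $\sigma$-weak $\ell^\infty$-direct sums of their block counterparts $\crpr{\Zp}{\alpha_i}{M_i}$ and $\rcrpr{M_i'}{\alpha_i}{\Z}$, and the Toeplitz map $T$ restricts to the Toeplitz maps $T_i$ of the blocks.

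For $X \in \ecp(\crpr{\Zp}{\alpha}{M})$, I would first check that each corner $X_i := (P_i X P_i)|_{\mathbb{H}^2_i}$ lies in $\ecp(\crpr{\Zp}{\alpha_i}{M_i})$ by embedding elements of the block algebra as zero-extensions into $\crpr{\Zp}{\alpha}{M}$ and restricting the resulting Schatten commutators to $\mathbb{H}^2_i$. By the hypothesis $(\star)$ for $(M_i, \alpha_i)$, pick $b_i \in \rcrpr{M_i'}{\alpha_i}{\Z}$ with $X_i - T_i(b_i) \in \mathfrak{S}_p(\mathbb{H}^2_i)$. The crucial input is the essential-norm identity $\|T_i(c)\|_{\rm e} = \|c\|$ for $c \in \rcrpr{M_i'}{\alpha_i}{\Z}$, which follows from Saito's convergence argument used in Lemma \ref{lemca} (the finite-dimensionality assumption there is only needed for essential normality of $T_{\lambda(1)}$, not for the norm identity) together with Remark \ref{remuni}. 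It both gives uniqueness of $b_i$ and the uniform bound $\|b_i\| = \|T_i(b_i)\|_{\rm e} = \|X_i\|_{\rm e} \leq \|X\|$, so that $b := \bigoplus_i b_i$ defines a bounded element of $\rcrpr{M'}{\alpha}{\Z}$.

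It then remains to verify $X - T_b \in \mathfrak{S}_p$. For finite $I$ this is immediate, since the off-diagonal blocks $P_i X P_j$ $(i \neq j)$ are in $\mathfrak{S}_p$ from $[X, P_i] \in \mathfrak{S}_p$, and the finitely many diagonal pieces $X_i - T_i(b_i)$ are Schatten by construction. The main obstacle is the case of infinite $I$, where a direct sum of blockwise Schatten operators need not itself be Schatten. My approach is to write $X = T_a + K_0$ with $a \in \pi(M)'$ and $K_0 \in \mathfrak{S}_p$ via Remark \ref{reminc}, then reduce the desired membership to $T_{a-b} \in \mathfrak{S}_p$ using that $T_a$ and $T_b$ are both block-diagonal with respect to $(P_i)$, and exploit the identity
\begin{equation*}
T_{a-b} \;=\; \bigoplus_i (X_i - T_i(b_i)) \;-\; \bigoplus_i (P_i K_0 P_i)|_{\mathbb{H}^2_i},
\end{equation*}
where the second summand is the diagonal Schur compression of $K_0$ and so lies in $\mathfrak{S}_p$ with norm at most $\|K_0\|_p$. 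The delicate remaining point is propagating the blockwise Schatten control to the global direct sum, which I expect to handle by combining this identity with the global commutator bounds $[T_a, T_{\lambda(n)}] \in \mathfrak{S}_p$ inherited from $X \in \ecp(\crpr{\Zp}{\alpha}{M})$ and the uniqueness of the $b_i$.
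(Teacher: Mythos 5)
Your framework through the blockwise application of $(\star)$ matches the paper and is sound, including the essential-norm argument giving uniqueness and the uniform bound $\sup_i\|b_i\|\le\|X\|$ (and you are right that the finite-dimensionality in Lemma~\ref{lemca} is irrelevant for the identity $\|T_x\|_{\rm e}=\|x\|$). But the proposal stops at exactly the point where a new idea is needed: you correctly observe that the infinite direct sum $\bigoplus_i L_i$ of the block remainders $L_i := X_i - T_i(b_i)\in\mathfrak{S}_p(\mathbb{H}^2_i)$ need not lie in $\mathfrak{S}_p(\mathbb{H}^2)$, and your closing sentence only says you ``expect to handle'' this by combining the block-diagonal identity with ``the global commutator bounds $[T_a,T_{\lambda(n)}]\in\mathfrak{S}_p$.'' A fixed shift $\lambda(n)$ cannot do it: writing $e_i$ for the central projection of the $i$-th block, the $i$-th block of $[T_a,T_{\lambda(n)}]$ only dominates $\|L_i - T_{\lambda(n)e_i}^*L_iT_{\lambda(n)e_i}\|_p$, which for a single $n$ may capture an arbitrarily small fraction of $\|L_i\|_p$, depending on where the mass of $L_i$ sits along $\ell^2(\Zp)$.

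The missing device is a \emph{block-dependent} shift. Since $T_{ae_i}-T_{\lambda(n)e_i}^*T_{ae_i}T_{\lambda(n)e_i}$ converges strongly to $T_{ae_i}-T_{b_i}$ as $n\to\infty$ (the remainder is compact and $T_{\lambda(n)e_i}\to 0$ weakly), lower semicontinuity of $\|\cdot\|_p$ produces, for each $i$, an $n_i$ with $\|[T_{\lambda(n_i)e_i},T_{ae_i}]\|_p > \tfrac12\|T_{ae_i}-T_{b_i}\|_p$. Because the $e_i$ are mutually orthogonal central projections and $\|\lambda(n_i)e_i\|\le 1$, the sum $x:=\sum_i \lambda(n_i)e_i$ is a single well-defined element of $\lcrpr{\Zp}{\alpha}{M}$, and $[T_x,T_a]=\bigoplus_i[T_{\lambda(n_i)e_i},T_{ae_i}]$ lies in $\mathfrak{S}_p$ because $T_a\in\ecp(\crpr{\Zp}{\alpha}{M})$. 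Summing $p$-th powers over the orthogonal blocks (respectively noting that the blockwise norms tend to zero when $p=\infty$) gives precisely the global Schatten control you needed, whence $X-T_b\in\mathfrak{S}_p$. Without this step of adapting the shift to each block and assembling one test operator, the argument does not close.
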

\begin{proof}
Let $X\in \ecp(\crpr{\mathbb{Z_+}}{\alpha}{M})$ be arbitrarily chosen.
Let $e_i$ be the central support of $\pi(M_i)$ in $\pi(M)$ and put $A:=\{ e_i \,|\, i\in I\}''$.
Note that $\lambda(1)$ and $P$ commute with $A$.
Then there exists $a\in \pi(M)'\subset A'$ such that $X-T_a \in \mathfrak{S}_p$.
Since $T_a$ is also in $\ecp(\crpr{\mathbb{Z_+}}{\alpha}{M})$ and $\crpr{\mathbb{Z_+}}{\alpha}{M}=\sum_{i\in I}^\oplus \crpr{\mathbb{Z_+}}{\alpha_i}{M_i}$, we have $T_{ae_i} \in \ecp(\crpr{\mathbb{Z_+}}{\alpha_i}{M_i})$.
By assumption, there exist $b_i \in \rcrpr{M'_i}{\alpha_i}{\Z}$ and $K_i\in \mathfrak{S}_p(e_iL^2(M))$
such that $T_{ae_i}=T_{b_i} + K_i$.
Set $b:=\sum_{i\in I}b_i$ and $K:=\sum_{i\in I} K_i$.
Since $b\in \sum_{i\in I}^\oplus \rcrpr{M'_i}{\alpha_i}{\Z} = \rcrpr{M'}{\alpha}{\Z}$,
it suffices to prove that  $K\in \mathfrak{S}_p$.
Since $T_{ae_i}-T_{\lambda(n)e_i}^*T_{ae_i}T_{\lambda(n)e_i} \to K_i$ strongly as $n\to \infty$,
the lower-semicontinuity of $\|\cdot\|_p$ enable us to find $n_i \in \mathbb{N}$ in such a way that
$$
\|[T_{\lambda(n_i)e_i},T_{ae_i}] \|_p \geq \| T_{ae_i}-T_{\lambda(n_i)e_i}^*T_{ae_i}T_{\lambda(n_i)e_i}\|_p > 2^{-1}\|K_i\|_p.
$$
Then $x:=\sum_{i\in I}\lambda(n_i)e_i$ falls in $\lcrpr{\Zp}{\alpha}{M}$.
Since $T_a\in \ecp(\crpr{\mathbb{Z_+}}{\alpha}{M})$, one has $[T_x,T_a] = \sum_{i\in I} [T_{\lambda(n_i)e_i},T_{ae_i}] \in \mathfrak{S}_p$.
Hence, it follows from the inequality above that $K\in \mathfrak{S}_p$.
\end{proof}

\begin{proof}[Proof of Theorem \ref{thm3}]
By Corollary \ref{cor} and Lemma \ref{lemsum} we may and do assume that $M=M_n$ and $\alpha={\rm id}\otimes \beta_n$.
Decompose $\mathfrak{X}_n$ into the disjoint $\theta_n$-orbits $\mathfrak{X}_{n,j}$, $j\in J_n$.
Set $M_{n,j}:=M_n(\mathbb{C})\otimes \ell^\infty(\mathfrak{X}_{n,j})$, which sits inside $M_n=M_n(\mathbb{C})\otimes \ell^\infty(\mathfrak{X}_n)$ naturally.
Then $M_n=\sum_{j\in J_n}^\oplus M_{n,j}$, and clearly $\alpha(M_{n,j})=M_{n,j}$ holds for every $j \in J_n$.
Consequently, one has $(M_n, \alpha_n)=\sum_{j \in J_n}^\oplus (M_{n,j},\alpha |_{M_{n,j}})$.
By assumption each $\mathfrak{X}_{n,j}$ is a finite set, and hence $M_{n,j}$ is finite dimensional.
Therefore, the desired assertion follows from Theorem \ref{thmf} thanks to Lemma \ref{lemsum}.
\end{proof}

Here a question naturally arises.

\begin{question}\label{ques}
Let $\sigma$ be the $*$-automorphism on $\ell^\infty(\Z)$ induced from the translation $n\in \Z \mapsto n+1 \in \Z$.
Does $(\ell^\infty(\Z),\sigma )$ satisfy the condition $(\star)$?
\end{question}

In fact, the positive answer to the question enables us to get rid of the assumption from Theorem \ref{thm3} as follows.
We use the notation in the proof of Theorem \ref{thm3}.
Thanks to Lemma \ref{lemsum} and Theorem \ref{thm3},
it suffices to prove that for every infinite $\theta_n$-orbit $\mathfrak{X}_{n,j}$, $(M_{n,j},\alpha|_{M_{n,j}})$ satisfies the condition $(\star)$. 
Then $(M_{n,j},\alpha|_{M_{n,j}})$ can be identified with $(M_n(\mathbb{C})\otimes \ell^\infty(\Z),{\rm Ad}v_{n,j} \circ {\rm id}\otimes \sigma)$ for a unitary element $v_{n,j} \in M_n(\mathbb{C})\otimes \ell^\infty(\mathfrak{X}_{n,j})$,
and hence we may assume that $(M,\alpha)=(M_n(\mathbb{C})\otimes \ell^\infty(\Z),{\rm id}\otimes \sigma)$.
Write $\ell^\infty:=\ell^\infty(\Z)$ and $\ell^2:=\ell^2(\Z)$ for simplicity.
The standard form of $M_n(\mathbb{C})\otimes \ell^\infty$ becomes $M_n(\mathbb{C})\otimes \mathbb{C}1 \otimes \ell^\infty$ on $\mathbb{C}^n\otimes \mathbb{C}^n \otimes \ell^2$.
Hence, $\lcrpr{\Z}{{\rm id}\otimes \sigma}{(M_n(\mathbb{C})\otimes \ell^\infty)}$ and $\rcrpr{(M_n(\mathbb{C})\otimes \ell^\infty)'}{{\rm id}\otimes \sigma}{\Z}$
become $M_n(\mathbb{C})\otimes \mathbb{C}1 \otimes (\lcrpr{\Z}{\sigma}{\ell^\infty})$ and $\mathbb{C}1\otimes M_n(\mathbb{C}) \otimes (\rcrpr{\ell^\infty}{\sigma}{\Z})$,
respectively.
It is easily seen that
$$
\ecp(M_n(\mathbb{C})\otimes \mathbb{C}1 \otimes (\crpr{\Zp}{\sigma}{\ell^\infty})) \subset \mathbb{C}1\otimes M_n(\mathbb{C}) \otimes \ecp(\crpr{\Zp}{\sigma}{\ell^\infty}) + \mathfrak{S}_p.
$$
Therefore, if Question \ref{ques} had an affirmative answer, then $(M,\alpha)$ would satisfy the condition $(\star)$
since
\begin{align*}
\ecp(\crpr{\mathbb{Z_+}}{\alpha}{M})
&=  \ecp(  M_n(\mathbb{C})\otimes \mathbb{C}1 \otimes (\crpr{\Zp}{\sigma}{\ell^\infty}) ) \\
&\subset \mathbb{C}1\otimes M_n(\mathbb{C}) \otimes \ecp(\crpr{\Zp}{\sigma}{\ell^\infty}) + \mathfrak{S}_p \\
&\subset \mathbb{C}1\otimes M_n(\mathbb{C})\otimes T(\rcrpr{\ell^\infty}{\sigma}{\Z}) + \mathfrak{S}_p\\
&= T(\rcrpr{M'}{\alpha}{\Z}) + \mathfrak{S}_p.
\end{align*}
Finally, we should remark that the canonical implementing unitary operator of $\sigma$ is nothing but the bilateral shift on $\ell^2(\Z)$ with respect to the standard basis.
Hence Question \ref{ques} seems operator theoretic rather than operator algebraic.

\section*{Acknowledgment}
The author wishes to express his sincere gratitude to his supervisor, Professor Yoshimichi Ueda for his passionate guidance and continuous encouragement.

\end{document}